\pgfplotsset{compat=newest}
\newcolumntype{C}[1]{>{\centering\let\newline\\\arraybackslash\hspace{0pt}}m{#1}}
\newcommand{\enorm}{\ensuremath{| \! | \! |}}
\newcommand{\R}{\mathbb R}
\newcommand{\bA}{\mathbf A}
\newcommand{\bB}{\mathbf B}
\newcommand{\bH}{\mathbf H}
\newcommand{\bI}{\mathbf I}
\newcommand{\bP}{\mathbf P}
\newcommand{\bM}{\mathbf M}
\newcommand{\bQ}{\mathbf Q}
\newcommand{\bS}{\mathbf S}
\newcommand{\bV}{\mathbf V}
\newcommand{\bn}{\mathbf n}
\newcommand{\be}{\mathbf e}
\newcommand{\bt}{\mathbf t}
\newcommand{\bu}{\mathbf u}
\newcommand{\bU}{\mathbf U}
\newcommand{\bv}{\mathbf v}
\newcommand{\bw}{\mathbf w}
\newcommand{\bbf}{\mathbf f}
\newcommand{\A}{\mathcal A}
\newcommand{\T}{\mathcal T}
\newcommand{\tr}{{\rm tr}}
\newcommand{\Div}{\mathop{\rm div}}
\newcommand{\divG}{{\mathop{\,\rm div}}_{\Gamma}}
\newcommand{\gradG}{\nabla_{\Gamma}}
\newcommand{\gradh}{\nabla_{\Gamma_h}}
\newcommand{\gradk}{\nabla_{\Gamma_h^k}}
\newcommand{\divk}{{\mathop{\,\rm div}}_{\Gamma_h^k}}
\newcommand{\divh}{{\mathop{\,\rm div}}_{\Gamma_h}}
\newcommand{\cE}{\mathcal E}
\newcommand{\cT}{\mathcal T}
\newcommand{\nn}{\mathbb{N}}
\newcommand{\la}{\left\langle}
\newcommand{\ra}{\right\rangle}
\newtheorem{remark}{Remark}[section]
\numberwithin{equation}{section}
\begin{document}

\title{Analysis of the Taylor-Hood Surface Finite Element Method for the surface Stokes equation}
\author{
Arnold Reusken\thanks{Institut f\"ur Geometrie und Praktische  Mathematik, RWTH-Aachen
University, D-52056 Aachen, Germany (reusken@igpm.rwth-aachen.de).}
}
\maketitle

\begin{abstract} We consider the surface Stokes equation on a smooth closed hypersurface in $\R^3$. For discretization of this problem a generalization of the surface finite element method (SFEM)  of Dziuk-Elliott combined with a Hood-Taylor pair of finite element spaces has been used in the literature. We call this method Hood-Taylor-SFEM. This  method uses a penalty technique to weakly satisfy the tangentiality constraint. In this paper we present a discretization error analysis of this method resulting in optimal discretization error bounds in an energy norm. We also  address  linear algebra aspects related to (pre)conditioning of the system matrix.   
\end{abstract}
\begin{keywords} surface Stokes equation, Taylor-Hood finite element pair, finite element error analysis 
 \end{keywords} 
\section{Introduction}
There is a substantial recent literature on numerical approximation of the surface Stokes equations, e.g., \cite{Jankuhn1,olshanskii2018finite,OlshanskiiZhiliakov2019,Lederer2019,reusken2018stream,Brandner2020,Bonito2019a,Jankuhnetal2020,FORjointpaper,demlow2023tangential}.  In these papers different finite element techniques are treated, e.g., $H^1$-conforming methods in which the tangentiality constraint is treated by a penalty method \cite{Jankuhn1,olshanskii2018finite,OlshanskiiZhiliakov2019,Jankuhnetal2020},  $H (\Div_\Gamma)$-conforming methods combined with a Piola transformation approach \cite{Lederer2019,Bonito2019a}, discretization based on a stream function formulation \cite{reusken2018stream,Brandner2020}, or an  $H (\Div_\Gamma)$-conforming method  that avoids penalization and uses a specific construction of nodal degrees of freedom for the velocity field \cite{demlow2023tangential}.
In some of these papers rigorous discretization error analyses are presented.
There are also recent papers in which techniques used for the Stokes equations are extended  to  Navier-Stokes equations on stationary or evolving surfaces, e.g.,  \cite{nitschke2012finite,fries2018higher,reuther2018solving,Olshanskii2023}.

The conceptually maybe simplest method for discretization of  surface Stokes (or Navier-Stokes) equations is based on a natural generalization of the surface finite element method (SFEM), introduced by Dziuk-Elliott for scalar surface PDEs \cite{Dziuketal_AN_2013}, to vector-valued equations. The basic idea of this method, which has been used in the literature in e.g., \cite{fries2018higher,reuther2018solving,Reuther_Nitschke_Voigt_2020,FORjointpaper}, is as follows. Using a suitable consistent penalty term the Stokes problem on a  two-dimensional surface $\Gamma \subset \R^3$ can be written in a variational form with a velocity test and trial space, denoted by $\bV_\ast$, that contains arbitrary, i.e, \emph{not} necessarily tangential, three-dimensional velocity vectors. The tangential components of these vectors have $H^1(\Gamma)$ smoothness. The surface $\Gamma$ is approximation by a shape regular triangulation $\Gamma_h$ (for higher order approximation one can use the technique from \cite{demlow2009higher}, cf. below). On the triangular elements of $\Gamma_h$ we use a ``simple'' $H^1$-conforming pair (for velocity and pressure).  A very natural choice is the Taylor-Hood $\bP_m$-$P_{m-1}$ ($m \geq 2$) pair of finite element spaces. With this pair one can construct a Galerkin discretization of the surface Stokes variational problem in the product space $\bV_\ast \times L^2_0(\Gamma)$, with a ``variational crime'' due to the geometry approximation.  One can interpret this as a generalization of the SFEM to vector-valued problems in the sense that  one essentially discretizes the pressure and each of the three velocity components using a scalar surface finite element technique. Hence, such a method is very easy to implement if an implementation of the scalar SFEM with continuous piecewise polynomial finite elements is already available.

The main contribution of this paper is a \emph{discretization error analysis of this Taylor-Hood-SFEM}. We briefly address a few key points of the analysis. We study the general case $m \geq 2$ and thus for optimal order discretization errors we need a sufficiently accurate geometry approximation $\Gamma_h \approx \Gamma$. For this we use the parametric method introduced in \cite{demlow2009higher}. 
The polynomial order used in the parametric mapping for the geometry approximation is denoted by $k$. The case $k=1$ corresponds to a piecewise planar geometry approximation. 
 A key point in the analysis is the discrete inf-sup stability. We first show that for any $m\geq 2$ the discrete inf-sup stability property for the case $k \geq 2$ is equivalent to the discrete inf-sup stability property for the case  $k=1$. Then this property for $k=1$ is proved with  arguments that are essentially the same as in the Euclidean case, cf. \cite{Ern04,Verfuerth84}, modulo perturbations due to geometry approximation.  
Using this stability result and a Strang-Lemma, the error analysis boils down to the analysis of approximation errors for the Taylor-Hood pair and of consistency errors (caused by geometry approximation). Bounds for these errors are available in the literature. Combining these  stability, approximation and consistency results we obtain an optimal error bound in a natural energy norm. Besides this discretization error analysis we also address linear algebra aspects. We show that the penalty technique has no significant negative effect on the condition number of the system matrix. We also prove  that,  as in the standard Stokes case,   the pressure mass matrix  is an optimal preconditioner for the Schur complement matrix. 

In this paper we do not include results of numerical experiments. In the paper \cite{FORjointpaper} an extensive numerical study of the Taylor-Hood-SFEM applied to the surface Stokes equation is presented. In that paper the optimal order convergence rates of the method are demonstrated and its performance is compared with that of certain other discretization methods.  

In none of the papers mentioned above a discretization error analysis of the Taylor-Hood-SFEM is studied. In  the recent work \cite{HarderingPraetorius2023}, however,  a topic very similar to that of this paper is treated. We briefly comment on how our work is related to \cite{HarderingPraetorius2023}. The analysis in \cite{HarderingPraetorius2023} is very different from the one presented in this paper. In \cite{HarderingPraetorius2023}, for the discrete inf-sup stability analysis the macro-element technique of Stenberg \cite{Stenberg1984} is used. On the one hand this makes the analysis relatively more technical because one has  to deal with suitable equivalence classes of macro-elements. On the other hand, the analysis is more general since it applies not only to the Taylor-Hood finite elements but also to other pairs, e.g., the MINI element and the $\bP_2$-$P_0$ pair. A further difference is related to the approximation of the normal in the penalty term. In \cite{HarderingPraetorius2023} the discrete normal $\bn_h$ on  the discrete surface approximation $\Gamma_h$ is used, whereas in our setting we use an ``improved'' normal $\hat \bn_h$, cf. \eqref{betternormal} below. In \cite{HarderingPraetorius2023} this leads to a suboptimal error bound in the energy norm and  optimal error bounds  in ``tangential'' $H^1$- and $L^2$-norms for velocity and pressure, respectively. In our analysis we obtain optimal bounds in  the energy norm. In \cite{HarderingPraetorius2023} optimal $L^2$-error bounds (in a tangential norm) are derived, whereas in our paper we do not analyze $L^2$-norm error bounds. Finally we note that linear algebra aspects are not addressed in \cite{HarderingPraetorius2023}.

\section{Continuous problem} \label{sectioncont}
Let $\Gamma \subset \R^3$ be a connected compact smooth two-dimensional surface without boundary.   A tubular neighborhood of $\Gamma$ is denoted by
$
U_\delta := \left\lbrace x \in \mathbb{R}^3 \mid \vert d(x) \vert < \delta \right\rbrace,
$
with $\delta > 0$ and $d$ the signed distance function to $\Gamma$, which we take negative in the interior of $\Gamma$.  On $U_\delta$ we define $\bn(x) = \nabla d(x)$,  $\bH(x) = \nabla^2d(x)$,  $\bP = \bP(x):= \bI - \bn(x)\bn(x)^T$, and the closest point projection $\pi(x) = x - d(x)\bn(x)$. We assume $\delta>0$ to be sufficiently small such that the decomposition $
x = \pi(x) + d(x) \bn(x)
$
is unique for all $x \in U_{\delta}$. The constant normal extension of vector functions $\bv \colon \Gamma \to \mathbb{R}^3$ is defined as $\bv^e(x) := \bv(\pi(x))$, $x \in U_{\delta}$. The extension of scalar functions is defined similarly. Note that on $\Gamma$ we have $\nabla \bv^e = \nabla(\bv \circ \pi) = \nabla \bv^e \bP$, with $\nabla \bw := (\nabla w_1, \nabla w_2, \nabla w_3)^T \in \mathbb{R}^{3 \times 3}$ for smooth vector functions $\bw \colon U_\delta \to \mathbb{R}^3$. For a scalar function $g \colon U_\delta \to \mathbb{R}$ and a vector function $\bv \colon U_\delta \to \mathbb{R}^3$ we define the surface (tangential and covariant) derivatives by
\begin{equation*} \begin{split}
\nabla_{\Gamma} g(x) &= \bP(x)\nabla g(x), \quad x \in \Gamma, \\
\nabla_{\Gamma} \bv(x) &= \bP(x)\nabla \bv(x) \bP(x), \quad x \in \Gamma.
\end{split}
\end{equation*} 
If $g$, $\bv$ are defined only on $\Gamma$, we use these definitions applied to the extensions $g^e$, $\bv^e$.
On $\Gamma$  the surface strain tensor is given by
$
E(\bu):= \frac{1}{2} \left( \gradG \bu + \gradG \bu^T \right)
$.
 The surface divergence operator for vector-valued functions $\bu \colon \Gamma \to \mathbb{R}^3$ and tensor-valued functions $\bA \colon \Gamma \to \mathbb{R}^{3\times3}$ are defined as
\begin{equation*} \begin{split}
\divG \bu &:= \textrm{tr} (\gradG \bu),  \\
\divG \bA &:= \left( \divG (\be_1^T\bA), \divG (\be_2^T\bA),\divG (\be_3^T\bA) \right)^T,
\end{split}
\end{equation*}
with $\be_i$ the $i$th basis vector in $\mathbb{R}^3$. For a given force vector $\bbf \in L^2(\Gamma)^3$, with $\bbf \cdot \bn =0$, and a source term $g \in L^2(\Gamma)$, with $\int_\Gamma g\, ds =0$, we consider the following \emph{surface Stokes problem}: determine $\bu \colon \Gamma \to \R^3$ with $\bu \cdot \bn = 0$ and $p \colon \Gamma \to \R$ with $\int_\Gamma p\, ds =0$ such that
\begin{equation} \label{eqstrong} \begin{split}
- \bP \divG (E(\bu)) + \bu + \gradG p &= \bbf \qquad \text{on } \Gamma,  \\
\divG \bu &= g \qquad \text{on } \Gamma.
\end{split}
\end{equation}
We added the zero order term on the left-hand side to avoid technical details related to the kernel of the strain tensor $E$ (the so-called Killing vector fields).
The surface Sobolev space of weakly differentiable vector valued functions is denoted by
\begin{equation} \label{eqdefH1}
\begin{gathered}
\bV:= H^1(\Gamma)^3, \quad \text{with} ~ \Vert \bu \Vert_{H^1(\Gamma)}^2 := \int_{\Gamma} \Vert \bu(s) \Vert_2^2 + \Vert \nabla \bu^e(s) \Vert_2^2 \, ds.
\end{gathered}
\end{equation}
The corresponding subspace of \emph{tangential} vector field is denoted by
\begin{equation*}
\bV_T := \left\lbrace \bu \in \bV \mid \bu \cdot \bn =0 \right\rbrace.
\end{equation*}
A vector $\bu \in \bV$ can be orthogonally decomposed into a tangential and a normal part. We use the notation:
\begin{equation*}
\bu = \bP \bu + (\bu \cdot \bn)\bn =: \bu_T + u_N\bn.
\end{equation*}
For $\textbf{u}, \textbf{v} \in \bV$ and $p \in L^2(\Gamma$) we introduce the bilinear forms
\begin{align}
a(\textbf{u}, \textbf{v}) &:= \int_\Gamma E(\bu) : E(\bv) \, ds + \int_\Gamma \textbf{u} \cdot \textbf{v} \, ds, \label{blfa} \\
b(\textbf{u}, p) &:= - \int_\Gamma p \divG \bu_T \, ds. \label{blfb} 
\end{align}
Note that in the definition of $b(\bu,p)$ only the \emph{tangential} component of $\bu$ is used, i.e., $b(\bu,p)=b(\bu_T,p)$ for all $\bu \in \bV$, $p\in L^2(\Gamma)$. 
For $p \in H^1(\Gamma)$ integration by parts yields
\begin{equation}\label{Bform}
b(\bu,p)=\int_\Gamma  \bu_T\cdot \gradG p \, ds = \int_\Gamma  \bu \cdot \gradG p \, ds.
\end{equation}
We introduce the following variational formulation of \eqref{eqstrong}: determine $(\bu_T, p)  \in \bV_T \times L^2_0(\Gamma)$ such that 
\begin{equation} \label{contform} \begin{split}
a(\bu_T, \bv_T) + b(\bv_T, p) &= (\bbf, \bv_T)_{L^2(\Gamma)} ~~~ \text{for all}~ \bv_T \in \bV_T, \\
b(\bu_T,q) &= (-g,q)_{L^2(\Gamma)} ~~~ \text{for all}~ q \in L^2(\Gamma).
\end{split}
\end{equation}
The bilinear form $a(\cdot,\cdot)$ is continuous on $\bV$, hence on $\bV_T$. Ellipticity of $a(\cdot,\cdot)$ on $\bV_T$ follows from the following surface Korn inequality, that holds if $\Gamma$ is $C^2$ smooth ((4.8) in \cite{Jankuhn1}): 
 There exists a constant $c_K  \in (0,1)$ such that 
\begin{equation} \label{Korn}
\Vert \bu \Vert_{L^2(\Gamma)} + \Vert E(\bu) \Vert_{L^2(\Gamma)} \geq c_K \Vert \bu \Vert_{H^1(\Gamma)} \qquad \text{for all } \bu \in \bV_T.
\end{equation}
The bilinear form $b(\cdot,\cdot)$ is continuous on $\bV_T \times L_0^2(\Gamma)$ and satisfies the following inf-sup condition (Lemma 4.2 in \cite{Jankuhn1}):
There exists a constant $c>0$ such that estimate
\begin{equation} \label{LBBcont}
\inf_{p \in L^2_0(\Gamma)} \sup_{\bv_T \in \bV_T} \frac{b(\bv_T, p)}{\Vert \bv_T \Vert_{H^1(\Gamma)} \Vert p \Vert_{L^2(\Gamma)}} \geq c
\end{equation}
holds.
Hence, the weak formulation \eqref{contform} is \emph{a well-posed problem}.  The discretization method that we consider in this paper uses an approach in which normal velocity components are allowed but penalized in a suitable way. This method is essentially  (i.e., apart from geometric errors) a Galerkin approach applied to an \emph{extended formulation} of \eqref{contform} that we briefly discuss in the next subsection.
\subsection{Well-posed extended variational formulation} \label{secttangential}
We introduce a larger space 
$\bV_T \subset \bV \subset\bV_\ast:= \left\lbrace \bu \in L^2(\Gamma)^3 \mid \bu_T \in H^1(\Gamma)^3, u_N \in L^2(\Gamma) \right\rbrace$ and  
 bilinear forms
 \begin{align}
 k(\bu,\bv) & := \eta \int_\Gamma  (\textbf{u} \cdot \textbf{n}) ~ (\textbf{v} \cdot \textbf{n})  \, ds \qquad \bu,\bv \in \bV_*, \label{defk} \\
 A(\bu,\bv)  & :=a(\bP\bu, \bP\bv) + k(\bu, \bv) \qquad \bu,\bv \in \bV_*, \label{defA}
 \end{align}
with $\eta \geq 1$ a penalty parameter. A convenient norm on  $\bV_*$ is $\Vert  \bu \Vert_{V_*}^2 := \Vert \bu_T \Vert_{H^1(\Gamma)}^2 + \eta \Vert u_N \Vert_{L^2(\Gamma)}^2$. We then have (with $c_K$ from \eqref{Korn}):
\begin{equation} \label{Aellip}
   c_K^2 \|\bu\|_{\bV_\ast}^2 \leq A(\bu,\bu) \leq \|\bu\|_{\bV_\ast}^2 \quad \text{for all}~ \bu \in \bV_\ast.
\end{equation} 
A  penalty surface Stokes formulation is: Determine $(\bu, p)  \in \bV_* \times L^2_0(\Gamma)$ such that 
\begin{equation} \label{projectedcontform1} \begin{split}
A(\bu,\bv) + b(\bv, p) &= (\bbf, \bv)_{L^2(\Gamma)} ~~~ \text{for all}~ \bv \in \bV_*, \\
b(\bu,q) &= (-g,q)_{L^2(\Gamma)} ~~~ \text{for all}~ q \in L^2(\Gamma).
\end{split}
\end{equation} 
Note that in this formulation the vectors in the velocity space $\bV_*$ are not necessarily tangential.
The bilinear form $A(\cdot, \cdot)$ is elliptic on $\bV_\ast$, cf. \eqref{Aellip}. The inf-sup property of $b(\cdot,\cdot)$ on $\bV_\ast\times L_0^2(\Gamma)$  is an easy consequence of \eqref{LBBcont}. Using this we obtain  the following result (Theorem~6.1  in \cite{Jankuhn1}):
\begin{lemma}
Problem \eqref{projectedcontform1} is well-posed. The unique solution solves \eqref{contform}.
\end{lemma}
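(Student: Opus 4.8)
The plan is to prove the two assertions in turn. For well-posedness of the saddle-point problem \eqref{projectedcontform1} I would invoke the standard Brezzi theory for symmetric saddle-point problems, which requires continuity of $A(\cdot,\cdot)$ and $b(\cdot,\cdot)$, ellipticity of $A(\cdot,\cdot)$ on $\bV_\ast$ (or at least on the kernel of $b$), and an inf-sup condition for $b(\cdot,\cdot)$ on $\bV_\ast \times L^2_0(\Gamma)$. Continuity of $A$ is immediate from the continuity of $a$ and the bound $k(\bu,\bv) = \eta\int_\Gamma (\bu\cdot\bn)(\bv\cdot\bn)\,ds \leq \eta\Vert u_N\Vert_{L^2(\Gamma)}\Vert v_N\Vert_{L^2(\Gamma)} \leq \Vert\bu\Vert_{V_\ast}\Vert\bv\Vert_{V_\ast}$, and ellipticity on all of $\bV_\ast$ is already recorded in \eqref{Aellip}. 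Continuity of $b$ follows from $b(\bv,p)=b(\bv_T,p)$ together with $\Vert\bv_T\Vert_{H^1(\Gamma)}\leq\Vert\bv\Vert_{V_\ast}$.

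The only point needing a short argument is the inf-sup condition on the enlarged space. Here I would exploit $\bV_T\subset\bV_\ast$ together with the identity $b(\bv,p)=b(\bv_T,p)$: restricting the supremum over $\bV_\ast$ to test functions $\bv_T\in\bV_T$, for which $\Vert\bv_T\Vert_{V_\ast}=\Vert\bv_T\Vert_{H^1(\Gamma)}$ because their normal component vanishes, yields for every $p\in L^2_0(\Gamma)$
\begin{equation*}
\sup_{\bv\in\bV_\ast}\frac{b(\bv,p)}{\Vert\bv\Vert_{V_\ast}} \geq \sup_{\bv_T\in\bV_T}\frac{b(\bv_T,p)}{\Vert\bv_T\Vert_{H^1(\Gamma)}} \geq c\,\Vert p\Vert_{L^2(\Gamma)},
\end{equation*}
where the last inequality is exactly the continuous inf-sup estimate \eqref{LBBcont}. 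With the three Brezzi conditions in hand, \eqref{projectedcontform1} is well-posed.

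For the second assertion I would show that the velocity part $\bu$ of the solution is in fact tangential, i.e.\ $u_N=0$. The key is to test the first equation of \eqref{projectedcontform1} with the admissible function $\bv=u_N\bn\in\bV_\ast$, which is purely normal so that $\bP\bv=u_N\bP\bn=0$. Consequently $a(\bP\bu,\bP\bv)=0$ and $b(\bv,p)=b(\bv_T,p)=0$, while the right-hand side $(\bbf,u_N\bn)_{L^2(\Gamma)}=\int_\Gamma(\bbf\cdot\bn)\,u_N\,ds$ vanishes because $\bbf\cdot\bn=0$. The equation therefore collapses to $k(\bu,u_N\bn)=\eta\Vert u_N\Vert_{L^2(\Gamma)}^2=0$, forcing $u_N=0$ and hence $\bu=\bu_T\in\bV_T$. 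Restricting the first equation of \eqref{projectedcontform1} to tangential test functions $\bv_T\in\bV_T$ then removes the penalty term $k$ and replaces $A(\bu,\bv_T)$ by $a(\bu_T,\bv_T)$, so that $(\bu_T,p)$ satisfies \eqref{contform} verbatim; the divergence equation carries over directly. There is no genuine obstacle in this proof: it is essentially the standard saddle-point machinery, and the only mildly delicate points are confirming that the inf-sup constant survives the passage from $\bV_T$ to the larger space $\bV_\ast$ and that the consistency of the penalty term $k$ guarantees tangentiality of the solution via the test function $u_N\bn$.
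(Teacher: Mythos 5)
Your proposal is correct and follows essentially the same route as the paper, which only sketches the argument (ellipticity from \eqref{Aellip}, the inf-sup condition on $\bV_\ast\times L^2_0(\Gamma)$ as an easy consequence of \eqref{LBBcont} via tangential test functions, then standard saddle-point theory) and defers the details to Theorem~6.1 of \cite{Jankuhn1}. Your completion of the consistency step by testing with $u_N\bn$ to force $u_N=0$ is the standard argument and is sound.
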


The variational formulation \eqref{projectedcontform1} is \emph{consistent} in the sense that its solution is the same as that of \eqref{contform}. The discretization method that we explain below is essentially a Galerkin discretization of the formulation \eqref{projectedcontform1}.

For $\bu, \bv \in \bV$, based on the  identity 
\begin{equation} \label{identity}
E(\bu) = E(\bu_T) + u_N \bH,
\end{equation}
the term $a(\bP \bu,\bP \bv)$ used in \eqref{defA} can be reformulated as 
\begin{equation} \label{defbla}
a(\bP\bu,\bP\bv) = \int_\Gamma (E(\bu) - u_N \bH) : (E(\bv) - v_N \bH) \, ds + \int_\Gamma \bP \textbf{u} \cdot \bP \textbf{v} \, ds.
\end{equation}
In this reformulation one avoids differentiation of $\bP\bu$ and $\bP\bv$ and the derivative of $\bP$ enters through $\bH$.

\section{Surface approximation and  Taylor-Hood  finite element spaces} \label{sectparametric}
For the approximation of $\Gamma$ we use the technique introduced in \cite{demlow2009higher}. We briefly explain this method and summarize results derived in that paper.

Let $\{\Gamma_h \}_{h>0}$ be a family of  polyhedrons having triangular faces  whose vertices lie on $\Gamma$ (the latter condition can be relaxed). The set of triangular faces of $\Gamma_h$ is denoted by $\T_h$ and we assume that $ \{ \T_h \}_{h>0}$ is shape regular and  quasi-uniform. The maximal diameter of the triangles $T \in \T_h$ is $h$. The outward pointing piecewise constant unit normal on $\Gamma_h$ is denoted by $\bn_h$. For $k \geq 1$ and a given $T \in \T_h$ let $\phi_1^{k}, \ldots \phi_{n_k}^{k}$ be the standard finite element Lagrange basis of polynomials of degree $k$ on $T$ corresponding to the nodal points $x_1,\ldots x_{n_{k}} \in T$. On $T$ we define 
\[
 \pi_{k}(x):= \sum_{j=1}^{n_{k}} \pi(x_j) \phi_j^{k} (x), \quad x \in T.
\]
Employing this definition on each $T \in \T_h$ yields a continuous piecewise polynomial map $\pi_{k}: \Gamma_h \to \R^3$. The image of this map is used as surface approximation
\[
  \Gamma_h^{k}:= \pi_{k}(\Gamma_h)=\{\, \pi_{k}(x)~|~x \in \Gamma_h\,\}.
\]
Note that $\Gamma_h^1=\Gamma_h$. The  outward pointing piecewise smooth   unit normal on $\Gamma_h^{k}$ is denoted by $\bn_h^{k}$ (defined a.e.) and $\bP_h^{k}:= \bI - \bn_h^k(\bn_h^k)^T$. 
The corresponding Weingarten map is $\bH_h^k:=\nabla_{\Gamma_h^k} \bn_h^k$ (defined a.e.). The accuracy of the surface approximation
$\Gamma_h^k \approx \Gamma$ increases with $k$. In \cite{demlow2009higher} the following estimates for geometric quantities are derived (for $h$ sufficiently small):
\begin{align}
 \|d\|_{L^\infty(\Gamma_h^k)} & \leq C h^{k+1}, \label{demlow1} \\
 \|\bn - \bn_h^k\|_{L^\infty(\Gamma_h^k)}& \leq C h^{k}, \label{demlow2} \\
 \|\pi- \pi_k\|_{W^{i,\infty}(T)} & \leq C h^{k+1-i}, \quad 1 \leq i \leq k, \quad T \in  \T_h,\label{demlow3} \\
 \|\bH \circ \pi -\bH_h^k\|_{L^\infty(\Gamma_h^k)} & \leq C h^{k-1}. \label{demlow4} 
\end{align}
Let the surface measures on $\Gamma$ and on $\Gamma_h^k$ be denoted by $ds$ and $ds_{hk}$, respectively, and for $x \in \Gamma_h^k$ let $\mu_{hk}(x)$ be such that $\mu_{hk}(x)ds_{hk}(x)=ds(p(x))$. In \cite{demlow2009higher} a formula for $\mu_{hk}(x)$ is derived from which the estimate
\begin{equation} \label{demlow5}
 \|1-\mu_{hk}\|_{L^\infty(\Gamma_h^k)} \leq c h^{k+1}
\end{equation}
follows. In the analysis we also need a bound for the difference between the surface measures on $\Gamma_h$ and $\Gamma_h^k$. Let $ds_h$ be the surface measure on $\Gamma_h$ and $\tilde \mu_{hk}$ such that for $x \in \Gamma_h^k$ and $\tilde x \in \Gamma_h$ with $\pi_k(\tilde x)=x$ we have $ds_{hk}(x)= \tilde \mu_{hk}(\tilde x) ds_h(\tilde x)$. Using \eqref{demlow5} and straighforward perturbation estimates we get
\begin{equation} \label{estmu}
 \|1- \tilde \mu_{hk}\|_{L^\infty(\Gamma_h)} \leq c h^{2}.
\end{equation}
For functions $v$ defined on $\Gamma_h^k$ we define an extension $v^\ell$ in  a similar way as the extension of functions defined on $\Gamma$, namely by constant extension in the  normal direction $\bn$. For scalar functions $v$ on $\Gamma_h^k$ we define (a.e.) the surface derivative by $\gradk v:= \bP_h^k \nabla v^\ell$.  For vector valued functions $\bv$  on $\Gamma_h^k$ we define  $\gradk \bv:= \bP_h^k \nabla v^\ell \bP_h^k$. If $k=1$ we write $\gradh=\nabla_{\Gamma_h^1}$.
We now relate surface derivatives on $\Gamma_h$ and $\Gamma_h^k$, $k \geq 2$. For a function $v$ on $\Gamma_h^k$ that is differentiable at $x \in \Gamma_h^k$ we have, with $\tilde x=\pi_k^{-1}(x)$ and $\tilde v (\tilde x):=v(x)$
\[
  \gradh \tilde v(\tilde x)= \bP_h(\tilde x) \nabla \pi_k^\ell (\tilde x) \bP_h^k(x) \gradk v(x).
\]
Using $\nabla \pi= \bP- d \bH$ and the estimates \eqref{demlow1}-\eqref{demlow3} one obtains
\begin{equation} \label{gradbound1}
 \big\| \gradk v(x)- \gradh v(\pi_k^{-1}(x))\big\| \leq c h \big\|\gradk v(x)\big\|, \quad x \in \Gamma_h^k,
\end{equation}
with a constant $c$ independent of $h$, $x$, $v$.  With similar arguments one obtains for a vector valued function $\bv$ on $\Gamma_h^k$
\begin{equation} \label{gradbound2}
 \big\| \gradk \bv(x)- \gradh \bv(\pi_k^{-1}(x))\big\| \leq c h \big\|\gradk \bv(x)\big\|, \quad x \in \Gamma_h^k.
\end{equation}
These results imply norm equivalences, cf. \cite{demlow2009higher}:
\begin{equation} \label{normeq} \begin{split}
  \|v \|_{H^1(\Gamma_h^k)} & \sim \|v \circ \pi_k\|_{H^1(\Gamma_h)}, ~v \in H^1(\Gamma_h^k),\\  \|\bv \|_{H^1(\Gamma_h^k)}  & \sim \|\bv \circ \pi_k\|_{H^1(\Gamma_h)} \quad \bv \in H^1(\Gamma_h^k)^3,
  \end{split}
\end{equation}
where the constants in $\sim$ can be chosen independent of $h$.

We introduce the parameterized Taylor-Hood pair on the approximate surface $\Gamma_h^k$. 
For $m \in \nn $ let $V_h^m$ be the standard Lagrange $H^1$-conforming finite element space on $\Gamma_h$, i.e.,
$V_h^m:= \{ \chi \in C(\Gamma_h)~|~ \chi_{|T} \in P_m \quad \text{for all}~ T \in \T_h\,\}$. The Taylor-Hood pair on $\Gamma_h$ is given by the velocity-pressure pair $\tilde \bV_h \times \tilde Q_h$, with $\tilde \bV_h :=(V_h^m)^3$, $\tilde Q_h:=V_h^{m-1}$, $m \geq 2$. We define the corresponding Taylor-Hood pair on $\Gamma_h^k$ by lifting these spaces to $\Gamma_h^k$ using $\pi_k$:
\begin{equation} \label{THspace} \begin{split}
  \bV_h & := \{\, \bv_h  \in C(\Gamma_h^k)^3~|~ \bv_h \circ \pi_k^{-1} = \tilde \bv_h \quad \text{for a}~~\tilde \bv_h \in \tilde \bV_h \,\},\\
  Q_h & := \{\, q_h  \in C(\Gamma_h^k)~|~ q_h \circ \pi_k^{-1} = \tilde q_h \quad \text{for a}~~\tilde q_h \in \tilde Q_h \,\}.
\end{split} \end{equation}
 Note that these spaces depend on $k$ (degree used in geometry approximation) and on $m$ (degree used in Taylor-Hood pair). 
\section{Discrete problem}  Define
\begin{align*}
 E_h(\bu) &:= \frac12 \big(\gradk \bu + \gradk \bu^T\big), \quad E_{T,h}(\bu):=E_h(\bu) - (\bu \cdot \bn_h^k) \bH_h^k, \\
 a_{h}(\bu,\bv) &:= \int_{\Gamma_h^k} E_{T,h}(\bu):E_{T,h}(\bv)\, ds_{hk} + \int_{\Gamma_h^k} \bP_h^k \bu \cdot \bP_h^k \bv \, ds_{hk},\\ 
  b_h(\bu,q)& := \int_{\Gamma_h^k} \bu \cdot \gradk q \, ds_{hk},\\
 k_h(\bu,\bv)&:= \eta \int_{\Gamma_h^k} (\bu \cdot \hat{\bn}_h^k) (\bv \cdot \hat{\bn}_h^k)  \, ds_{hk}, \\
 A_{h}(\bu,\bv) & := a_{h}(\bu,\bv) +k_h(\bu,\bv).
\end{align*}
Based on the literature, we take a penalty parameter with scaling $\eta \sim h^{-2}$. For simplicity, in the remainder we take
\begin{equation} \label{scaleta}
 \eta = h^{-2}.
\end{equation}
The reason that we introduce yet another normal approximation $\hat{\bn}_h^k$  in the penalty bilinear form $k(\cdot,\cdot)$ is the following. From the literature \cite{jankuhn2021,Hardering2022} it is known  that for obtaining optimal order error estimates (in the full energy norm)  for vector-Laplace problems, the normal  used in the penalty term has to be a more accurate approximation of the exact normal $\bn$ than $\bn_h^k$. In the remainder we assume
\begin{equation} \label{betternormal}
  \|\bn - \hat \bn_h^k\|_{L^\infty(\Gamma_h^k)} \leq C h^{k+1}. 
\end{equation}
For simplicity we assume $\hat \bn_h^k \in \bV_h$. 
\begin{remark} \rm The use of the higher order approximation $\hat{\bn}_h^k$ can be avoided in the following sense. 
In \cite{Hardering2022} it is shown (for a vector-Laplace problem) that if one uses $\bn_h^k$ instead of  $\hat{\bn}_h^k$ in the penalty term and $\eta \sim  h^{-1}$ then  optimal bounds for the \emph{tangential} error hold. For the Stokes problem this is analyzed in \cite{HarderingPraetorius2023}. 
\end{remark}

As a discrete analogon of  $ E(\bP\bu)$ we  use $E_{T,h}(\bu)=E_h(\bu) - (\bu \cdot \bn_h^k) \bH_h^k$ instead of $E_h(\bP_h^k\bu)$, cf. \eqref{identity}-\eqref{defbla}. The reason for this is that $\bP_h^k\bu$ is in  the broken space  $\cup_{T \in \cT_h} H^1(\pi_k(T))^3$ but in general not in $H^1(\Gamma_h^k)^3$ and in the analysis of the discrete problem below it is convenient to avoid the use of the broken space. This, however, is a minor technical issue.
For a suitable (sufficiently accurate) extension of the data $\bbf$ and $g$ to $\Gamma_h^k$, denoted by $\bbf_h$ and $g_h$, with $\int_{\Gamma_h^k} g_h \, ds_{hk}=0$, the finite element method reads:
Find $(\bu_h, p_h) \in \bV_h \times Q_h$, with $\int_{\Gamma_h^k} p_h \, ds_{kh}=0$, such that
\begin{equation} \label{discreteform1}
\begin{aligned}
A_h(\bu_h,\bv_h) + b_h(\bv_h,p_h) & =(\mathbf{f}_h,\bv_h)_{L^2(\Gamma_h^k)} &\quad &\text{for all } \bv_h \in \bV_h \\
b_h(\bu_h,q_h)  & = (-g_h,q_h)_{L^2(\Gamma_h^k)} &\quad &\text{for all }q_h \in Q_h.
\end{aligned}
\end{equation}
A few implementation aspects of this discretization  are briefly addressed in 
Section~\ref{secNumerical}. 

\section{Error analysis} \label{secInfsup}
In the analysis below we often write $x \lesssim y$ to state
that the inequality $x \leq  cy$ holds for quantities $x, y$ with a constant $c$ independent of $h$. Similarly for $x \gtrsim y$,
and $x \sim$ will mean that both $x \lesssim y$ and $x \gtrsim y$  hold. We introduce the norm 
\[ \enorm \bv \enorm_k^2:= \|\bv\|_{H^1(\Gamma_h^k)}^2+ h^{-2}\|\bn \cdot \bv\|_{L^2(\Gamma_h^k)}^2, \quad \bv \in H^1(\Gamma_h^k)^3.
\]
Besides the bilinear form $b_h(\bv,q)=\int_{\Gamma_h^k} \bv \cdot \gradk q \, ds_{hk}$ we also need
\[
  b_h^\ast(\bv,q):= - \int_{\Gamma_h^k} \divk \bv \, q \, ds_{hk}.
\]
 To describe the relation between these two we introduce some further notation. Denote by $\cE_h$ the collection of all edges in the curved triangulation $\pi_k(\cT_h)$ that forms $\Gamma_h^k$. For $E \in \cE_h$ the two co-normals, corresponding to the two curved elements that have $E$ as common edge, are denoted by $\nu_h^+$ and $\nu_h^{-}$ and $[\nu_h]:=\nu_h^+ + \nu_h^{-}$ (defined on $E$). Note that if the surface $\Gamma_h^k$ would be $C^1$ at $E$ then $[\nu_h] =0$. This, however, does not hold in our case and we get the following partial integration  identity
\begin{equation} \label{PI}
 b_h(\bv,q)=b_h^\ast(\bv,q)+ \sum_{T \in \cT_h} \int_{\pi_k(T)} (\bv\cdot \bn_h^k) q \, \divk \bn_h^k \, ds_{hk} + \sum_{E \in \cE_h} \int_E [\nu_h]\cdot \bv \, q \, d\ell,
\end{equation}
for functions $\bv \in H^1(\Gamma_h^k)^3$, $q\in H^1(\Gamma_h^k)$. 
In the following  lemma we collect some estimates that are useful for the error analysis.
\begin{lemma} \label{lemma2}
 For $\bv, \bw \in H^1(\Gamma_h^k)^3$, $q \in H^1(\Gamma_h^k)$ the following holds:
 \begin{align} 
  \left| a_{h}(\bv,\bw) - a(\bP\bv^\ell,\bP \bw^\ell)\right| &  \lesssim h^k \enorm \bv \enorm_k \enorm \bw \enorm_k,  \label{u1} \\
   \left| b_h(\bv,q)- b_h^\ast(\bv,q)\right| & \lesssim h \enorm \bv\enorm_k \|q\|_{L^2(\Gamma_h^k)},  \label{estbdifference} \\
   \left| b_h(\bv,q)- b(\bv^\ell,q^\ell)\right| & \lesssim h^{k}  \| \bv\|_{L^2(\Gamma_h^k)} \|q\|_{H^1(\Gamma_h^k)}, \label{estbdifference1}\\
   \left| b_h(\bv,q)- b(\bv^\ell,q^\ell)\right| & \lesssim h^{k}  \|\bv\|_{H^1(\Gamma_h^k)} \|q\|_{L^2(\Gamma_h^k)} \quad \text{if}~~\bP\bv=\bv. \label{estbdifference2}
 \end{align}
\end{lemma}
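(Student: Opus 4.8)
The plan is to prove all four bounds by the same two-step template: transport every integral over $\Gamma_h^k$ to the exact surface $\Gamma$ (or pull the $\Gamma$-integral back to $\Gamma_h^k$) using the measure identity $\mu_{hk}\,ds_{hk}=ds\circ p$ with $\|1-\mu_{hk}\|_{L^\infty}\lesssim h^{k+1}$ from \eqref{demlow5}, and then compare the integrands pointwise using the geometric bounds \eqref{demlow2}, \eqref{demlow4}, the projection bound $\|\bP_h^k-\bP\|_{L^\infty}\lesssim h^k$ (from \eqref{demlow2}), and the gradient-transfer relation between $\gradk$ and $\gradG$ (the $\Gamma$/$\Gamma_h^k$ analogue of \eqref{gradbound2}, giving $\|\gradk v-(\gradG v^\ell)\circ p\|\lesssim h^k\|\gradk v\|$, and likewise for $\divk$ versus $\divG$), both standard consequences of \cite{demlow2009higher}. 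The recurring point to watch is that the normal component of $\bv$ carries the weight $h^{-1}$ in $\enorm\bv\enorm_k$, so that $\|\bn\cdot\bv\|_{L^2(\Gamma_h^k)}\le h\enorm\bv\enorm_k$; this extra factor $h$ is what I will trade against the geometric errors that are one order too large.

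For \eqref{u1} I would rewrite $a(\bP\bv^\ell,\bP\bw^\ell)$ via \eqref{defbla}, so that both sides are sums of two surface integrals with no integration by parts, hence no edge contributions. After the change of variables the comparison reduces to estimating in $L^2$ the difference $E_{T,h}(\bv)-(E(\bv^\ell)-(\bv^\ell\cdot\bn)\bH)\circ p$ and its zero-order analogue. The strain part $E_h(\bv)$ versus $E(\bv^\ell)$ and the projections $\bP_h^k$ versus $\bP$ each contribute $O(h^k)$. The delicate term is $(\bv\cdot\bn_h^k)\bH_h^k$ versus $(\bv^\ell\cdot\bn)\bH$: splitting $\bv\cdot\bn_h^k=\bv\cdot\bn+\bv\cdot(\bn_h^k-\bn)$, the second piece is $O(h^k)\|\bv\|$ by \eqref{demlow2}, while the first multiplies the Weingarten error $\bH_h^k-\bH$, which by \eqref{demlow4} is only $O(h^{k-1})$. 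Here the energy-norm weight saves the day: $\|\bn\cdot\bv\|_{L^2}\,\|\bH_h^k-\bH\|_{L^\infty}\lesssim(h\enorm\bv\enorm_k)\,h^{k-1}=h^k\enorm\bv\enorm_k$. Multiplying the $O(h^k)$ differences by the $O(\enorm\cdot\enorm_k)$ factors yields \eqref{u1}, the $\mu_{hk}$-error being negligible.

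For \eqref{estbdifference} I would start from \eqref{PI}, so $b_h-b_h^\ast$ equals a curvature (element) term plus an edge term. The curvature term $\sum_T\int(\bv\cdot\bn_h^k)q\,\divk\bn_h^k$ is immediate: $\divk\bn_h^k$ is bounded and $\|\bv\cdot\bn_h^k\|_{L^2}\lesssim\|\bn\cdot\bv\|_{L^2}+h^k\|\bv\|\lesssim h\enorm\bv\enorm_k$, producing exactly the factor $h$. The edge term $\sum_{E}\int_E[\nu_h]\cdot\bv\,q\,d\ell$ is the main obstacle. Its treatment needs (i) the co-normal consistency bound $\|[\nu_h]\|_{L^\infty(E)}\lesssim h^k$ (the two co-normals would cancel on the smooth $\Gamma$, and the tangent planes of $\Gamma_h^k$ deviate by $O(h^k)$), together with the sharper observation that the leading part of $[\nu_h]$ is \emph{normal} to $\Gamma$ whereas its tangential part is only $O(h^{k+1})$; (ii) scaled trace inequalities to pass from edge to element norms; and (iii) an inverse estimate where the bound is applied to discrete pressures, to absorb the residual $\|q\|_{H^1}$ into $\|q\|_{L^2}$. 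For $k\ge2$ the crude bound $\|[\nu_h]\|\lesssim h^k\le h^2$ already suffices, but for $k=1$ one genuinely needs the directional refinement: pairing the normal part of $[\nu_h]$ with $\bv$ produces $\bn\cdot\bv$, whose $h$-weighted norm supplies the missing power of $h$.

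Finally, \eqref{estbdifference1} is the most routine: writing $b(\bv^\ell,q^\ell)=\int_\Gamma\bv^\ell\cdot\gradG q^\ell\,ds$ by \eqref{Bform}, the change of variables plus the transfer estimate $\|\gradk q-(\gradG q^\ell)\circ p\|\lesssim h^k\|\gradk q\|$ gives the bound directly, using $\|\gradG q^\ell\|_{L^2(\Gamma)}\sim\|q\|_{H^1(\Gamma_h^k)}$ from \eqref{normeq} and with no edge terms. For \eqref{estbdifference2} I would first integrate $b_h$ by parts via \eqref{PI} to move the derivative onto $\bv$, compare $b_h^\ast(\bv,q)=-\int_{\Gamma_h^k}\divk\bv\,q$ with $b(\bv^\ell,q^\ell)=-\int_\Gamma\divG\bv^\ell\,q^\ell$ (the continuous identity \eqref{Bform} applies since $\bP\bv=\bv$ forces $\bv^\ell$ tangential on $\Gamma$), and then exploit tangentiality to see that the \eqref{PI} corrections collapse by an extra order: $\bv\cdot\bn=0$ kills the leading curvature and normal co-normal contributions, leaving $\|\bv\cdot\bn_h^k\|\lesssim h^k\|\bv\|$ and, through the $O(h^{k+1})$ tangential co-normal bound, an edge term of order $h^k$. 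I expect the edge/co-normal-jump analysis (items (i)--(iii)), and in particular its interplay with the $\enorm\cdot\enorm_k$ weight and with tangentiality, to be the technically hardest part of the lemma; the Weingarten order loss in \eqref{u1} is the second point requiring care.
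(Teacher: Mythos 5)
Your proposal is correct and follows essentially the same route as the paper: the partial integration identity \eqref{PI} with the curvature/element term and the co-normal edge term (including the directional splitting of $[\nu_h]$ into its normal part, paired with the $h$-weighted $\|\bn\cdot\bv\|_{L^2}$, and its higher-order tangential part) for \eqref{estbdifference} and \eqref{estbdifference2}, and direct gradient-transfer plus measure-change arguments for \eqref{estbdifference1}. The only presentational difference is that the paper simply cites \cite[Lemma 5.16]{jankuhn2021}, \cite[Lemma 4.11]{Hardering2022} for \eqref{u1} (your sketch, trading the $O(h^{k-1})$ Weingarten error against $\|\bn\cdot\bv\|_{L^2}\lesssim h\enorm\bv\enorm_k$, is exactly the argument there) and uses the sharper tangential co-normal bound $\|\bP[\nu_h]\|_{L^\infty}\lesssim h^{2k}$ where you use the weaker but still sufficient $h^{k+1}$.
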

\begin{proof} 
The result \eqref{u1} is derived in  \cite[Lemma 5.16]{jankuhn2021}, \cite[Lemma 4.11]{Hardering2022}.
For the proof of \eqref{estbdifference} we use the partial integration identity \eqref{PI}.
 With \eqref{demlow2} and the definition of $\enorm \cdot \enorm_k$ we get  
 \begin{equation} \label{HH7} \begin{split}
  \left|\sum_{T \in \cT_h} \int_{\pi_k(T)} (\bv\cdot \bn_h^k) q \, \divk \bn_h^k \, ds_{hk} \right| &  \lesssim \|\bv\cdot \bn_h^k\|_{L^2(\Gamma_h^k)} \|q\|_{L^2(\Gamma_h^k)} \\ & \lesssim h \enorm \bv\enorm_k \|q\|_{L^2(\Gamma_h^k)}.
 \end{split} \end{equation}
 For the other term in the partial integration identity we note that the estimates $\|[\nu_h]\|_{L^\infty(\cE_h)}\lesssim h^k$ and $\|\bP[\nu_h]\|_{L^\infty(\cE_h)}\lesssim h^{2k}$ hold, cf. \cite[Lemma 3.5]{ORXimanum}, \cite[Lemma 7.12]{JankuhnThesis}. Using this and a standard trace estimate we obtain
 \begin{equation} \label{HH8} \begin{split}
       & \left|\sum_{E \in \cE_h} \int_E [\nu_h]\cdot \bv \, q \, d\ell \right|  \lesssim  h^k \sum_{E \in \cE_h}\int_E |\bn \cdot\bv|\, | q | \,  d\ell   + h^{2k}\sum_{E \in \cE_h} \int_E \| \bv\|\, | q | \,  d\ell \\
        & \lesssim  h^{k-1} \|\bn \cdot \bv\|_{L^2(\Gamma_h^k)} \|q\|_{L^2(\Gamma_h^k)} + h^{2k-1} \|\bv\|_{L^2(\Gamma_h^k)} \|q\|_{L^2(\Gamma_h^k)}  \\
       & \lesssim h^{k} \enorm \bv\enorm_k \|q\|_{L^2(\Gamma_h^k)}.
                              \end{split}
\end{equation}
Using the estimates  \eqref{HH7} and \eqref{HH8} in \eqref{PI} yields the result \eqref{estbdifference}. The result \eqref{estbdifference1} follows from the relation  $\gradk q = \bP_h^k(\bI - d \bH) \gradG q^\ell \circ \pi$, \eqref{demlow2}, \eqref{demlow5} and standard estimates. For the result \eqref{estbdifference2} we first note that if in \eqref{estbdifference} we restrict to $\bv$ with $\bP\bv=\bv$ then using $\|\bP \bn_h^k\|_{L^\infty(\Gamma_h^k)} \lesssim h^k$ and $\bn \cdot \bP \bv=0$, the  estimates in \eqref{HH7}-\eqref{HH8} can be improved and we obtain
\begin{equation} \label{HH9}
 \left| b_h(\bv,q)- b_h^\ast(\bv,q)\right|  \lesssim h^k \|\bv\|_{L^2(\Gamma_h^k)} \|q\|_{L^2(\Gamma_h^k)}.
\end{equation}
Using $\divk \bv = \tr(\gradk \bv)$, $\divG \bv^\ell = \tr (\gradG \bv^\ell)$ and an estimate similar to \eqref{gradbound2} we obtain
\begin{align*} 
 \left| b_h^\ast(\bv,q)-b(\bv^\ell,q^\ell)\right| &  = \left| \int_{\Gamma_h^k} \divk \bv \, q \, ds_{hk} - \int_\Gamma \divG \bv^\ell \, q^\ell \, ds\right| \\ &  \lesssim h^k \|\bv\|_{H^1(\Gamma_h^k)} \|q\|_{L^2(\Gamma_h^k)}. 
\end{align*}
Combining this with \eqref{HH9} proves the estimate \eqref{estbdifference2}.
\end{proof}

\subsection{Ellipticity property}
The following lemma shows that with  the norm $\enorm \cdot \enorm_k$  we obtain (on $\bV_h$) an analogon of the norm equivalence \eqref{Aellip}. 
\begin{lemma} \label{lemma1}
 For $h$ sufficiently small we have:
 \begin{align} 
  A_h(\bv,\bv) &\lesssim \enorm \bv \enorm_k^2, \quad \bv \in H^1(\Gamma_h^k),  \label{est1up} \\
  \enorm \bv_h  \enorm_k^2 & \lesssim  A_h(\bv_h,\bv_h), \quad \bv_h \in \bV_h. \label{est1low}
 \end{align}
\end{lemma}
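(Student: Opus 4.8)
The plan is to prove the two inequalities separately, reducing both to continuous quantities on $\Gamma$ by means of the perturbation estimate \eqref{u1}, the normal approximation \eqref{betternormal}, and the standard lift norm equivalences between $\Gamma_h^k$ and $\Gamma$ (cf.\ \eqref{normeq} and \cite{demlow2009higher}), and then invoking the continuous Korn inequality \eqref{Korn} for the tangential part. The only genuinely discrete ingredient is an inverse inequality, needed for the lower bound \eqref{est1low}; this is what forces that bound to be restricted to $\bV_h$. For the upper bound \eqref{est1up} I would split $A_h(\bv,\bv)=a_h(\bv,\bv)+k_h(\bv,\bv)$. By \eqref{u1} and continuity of $a$, $a_h(\bv,\bv)\le a(\bP\bv^\ell,\bP\bv^\ell)+Ch^k\enorm\bv\enorm_k^2\lesssim \|\bv^\ell\|_{H^1(\Gamma)}^2+Ch^k\enorm\bv\enorm_k^2\lesssim\enorm\bv\enorm_k^2$. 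For the penalty term I would write $\bv\cdot\hat\bn_h^k=\bv\cdot\bn+\bv\cdot(\hat\bn_h^k-\bn)$ and use \eqref{betternormal} to get $h^{-2}\|\bv\cdot\hat\bn_h^k\|_{L^2(\Gamma_h^k)}^2\lesssim h^{-2}\|\bv\cdot\bn\|_{L^2(\Gamma_h^k)}^2+h^{2k}\|\bv\|_{L^2(\Gamma_h^k)}^2\lesssim\enorm\bv\enorm_k^2$. No discreteness is used, so \eqref{est1up} holds for all $\bv\in H^1(\Gamma_h^k)^3$.

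For the lower bound, fix $\bv_h\in\bV_h$ and decompose its lift as $\bv_h^\ell=(\bv_h^\ell)_T+w\bn$ with $w:=\bv_h^\ell\cdot\bn$. Two of the three pieces of $\enorm\bv_h\enorm_k^2$ are controlled exactly as in the continuous setting. First, \eqref{u1} gives $a(\bP\bv_h^\ell,\bP\bv_h^\ell)\le a_h(\bv_h,\bv_h)+Ch^k\enorm\bv_h\enorm_k^2\le A_h(\bv_h,\bv_h)+Ch^k\enorm\bv_h\enorm_k^2$, and since $\bP\bv_h^\ell=(\bv_h^\ell)_T$ is tangential, \eqref{Korn} yields $\|(\bv_h^\ell)_T\|_{H^1(\Gamma)}^2\lesssim a(\bP\bv_h^\ell,\bP\bv_h^\ell)$. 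Second, reversing the normal estimate from the upper bound, $h^{-2}\|\bv_h\cdot\bn\|_{L^2(\Gamma_h^k)}^2\lesssim k_h(\bv_h,\bv_h)+h^{2k}\|\bv_h\|_{L^2(\Gamma_h^k)}^2\le A_h(\bv_h,\bv_h)+h^{2k}\enorm\bv_h\enorm_k^2$. Thus both the tangential $H^1$ norm and the weighted normal $L^2$ norm are bounded by $A_h(\bv_h,\bv_h)$ up to terms carrying a positive power of $h$.

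The crux is to recover the remaining piece, namely the gradient of the normal component $\|\gradG w\|_{L^2(\Gamma)}$, which is \emph{not} controlled in the continuous setting (recall that in \eqref{Aellip} the form $A$ bounds the normal part only in weighted $L^2$). Here I would exploit the assumption $\hat\bn_h^k\in\bV_h$, so that $\bv_h\cdot\hat\bn_h^k$ is piecewise polynomial, and apply the inverse inequality $\|\gradk(\bv_h\cdot\hat\bn_h^k)\|_{L^2(\Gamma_h^k)}\lesssim h^{-1}\|\bv_h\cdot\hat\bn_h^k\|_{L^2(\Gamma_h^k)}=\sqrt{k_h(\bv_h,\bv_h)}\le\sqrt{A_h(\bv_h,\bv_h)}$. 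Writing $\bv_h\cdot\bn=\bv_h\cdot\hat\bn_h^k+\bv_h\cdot(\bn-\hat\bn_h^k)$ and estimating the second term with \eqref{betternormal} (its gradient producing an absorbable factor $h^{k+1}\|\gradk\bv_h\|$ and a lower order contribution $h^{k}\|\bv_h\|$), I obtain after transfer to $\Gamma$ via \eqref{normeq} that $\|\gradG w\|_{L^2(\Gamma)}\lesssim\sqrt{A_h(\bv_h,\bv_h)}+h^{k}\enorm\bv_h\enorm_k$. Combining this with $\|w\bn\|_{H^1(\Gamma)}\lesssim\|w\|_{L^2(\Gamma)}+\|\gradG w\|_{L^2(\Gamma)}$ and $\|\bv_h\|_{H^1(\Gamma_h^k)}\lesssim\|(\bv_h^\ell)_T\|_{H^1(\Gamma)}+\|w\bn\|_{H^1(\Gamma)}$ gives $\|\bv_h\|_{H^1(\Gamma_h^k)}^2\lesssim A_h(\bv_h,\bv_h)+h^{\alpha}\enorm\bv_h\enorm_k^2$ for some $\alpha>0$. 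Adding the weighted normal $L^2$ term from the previous paragraph yields $\enorm\bv_h\enorm_k^2\lesssim A_h(\bv_h,\bv_h)+h^{\alpha}\enorm\bv_h\enorm_k^2$, and for $h$ sufficiently small the last term is absorbed into the left-hand side, proving \eqref{est1low}.

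The main obstacle is precisely this control of $\|\gradG w\|_{L^2(\Gamma)}$: because $A$ measures the normal component only in weighted $L^2$, its gradient can be recovered solely on the discrete space through an inverse inequality, which is the reason \eqref{est1low} is stated for $\bv_h\in\bV_h$ and uses $\hat\bn_h^k\in\bV_h$. The remaining steps are routine geometric perturbations (the transfers between $\Gamma$ and $\Gamma_h^k$ via \eqref{normeq} and the measure estimates \eqref{demlow5}) and the final absorption of the $h^{\alpha}$ terms for $h$ small.
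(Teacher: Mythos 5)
Your proof is correct and follows essentially the same route as the paper: the upper bound via the normal--approximation estimate \eqref{betternormal}, and the lower bound via the perturbation estimate \eqref{u1}, the surface Korn inequality on $\Gamma$, a finite element inverse inequality applied to $\hat\bn_h^k\cdot\bv_h$ (which is exactly what restricts \eqref{est1low} to $\bV_h$), and absorption of the $O(h^k)$ terms for $h$ small. The only cosmetic difference is that the paper bounds $\|\bv_h\|_{H^1(\Gamma_h^k)}$ through $\|\bP\bv_h^\ell\|_{H^1(\Gamma)}+\|\bn\cdot\bv_h^\ell\|_{H^1(\Gamma)}$ rather than through your explicit tangential/normal splitting of $\bv_h^\ell$, which amounts to the same argument.
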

\begin{proof}
For $\bv \in H^1(\Gamma_h^k)$  we have
\[
  A_h(\bv,\bv) = \|E_{T,h}(\bv)\|_{L^2(\Gamma_h^k)}^2 + \|\bP_h^k \bv\|_{L^2(\Gamma_h^k)}^2 + h^{-2}\|\hat \bn_h^k\cdot \bv\|_{L^2(\Gamma_h^k)}^2. 
 \]
With $\|\bn - \hat \bn_h^k\|_{L^\infty(\Gamma_h^k)} \lesssim h^{k+1}$, cf. \eqref{betternormal}, we get $h^{-2}\|\hat \bn_h^k\cdot \bv\|_{L^2(\Gamma_h^k)}^2 \lesssim h^{-2}\|\bn \cdot \bv\|_{L^2(\Gamma_h^k)}^2  + \|\bv\|_{L^2(\Gamma_h^k)}^2$. From this and $\|E_{T,h}(\bv)\|_{L^2(\Gamma_h^k)} \lesssim \|\bv\|_{H^1(\Gamma_h^k)}$  we get the estimate  in \eqref{est1up}. For the estimate in \eqref{est1low} we first note $ h^{-2}\|\bn \cdot \bv\|_{L^2(\Gamma_h^k)}^2 \lesssim h^{-2} \|\hat \bn_h^k \cdot \bv\|_{L^2(\Gamma_h^k)}^2 + \|\bv\|_{L^2(\Gamma_h^k)}^2$ and thus
\begin{equation} \label{HH12}
  \enorm \bv  \enorm_k^2 \lesssim \|\bv\|_{H^1(\Gamma_h^k)}^2 + A_h(\bv,\bv).
\end{equation}
For estimatating $\|\bv\|_{H^1(\Gamma_h^k)}$ we use 
  the surface Korn inequality \cite[Lemma 4.1]{Jankuhn1} and \eqref{u1}:
\begin{equation} \label{89} \begin{split}
\|\bv\|_{H^1(\Gamma_h^k)}^2 & \lesssim \|\bP \bv^\ell\|_{H^1(\Gamma)}^2 +\|\bn \cdot \bv^\ell\|_{H^1(\Gamma)}^2 \lesssim a(\bP \bv^\ell, \bP \bv^\ell) + \|\bn \cdot \bv^\ell\|_{H^1(\Gamma)}^2 \\
     & \lesssim A_h(\bv,\bv) +h^k \enorm \bv \enorm_k^2 + \|\bn \cdot \bv^\ell\|_{H^1(\Gamma)}^2.
\end{split}
\end{equation}
We insert this in \eqref{HH12} and shift (for $h$ sufficiently small) the term $h^k \enorm \bv \enorm_k^2$ to the left-hand side. We now  estimate the last term in the bound in \eqref{89}. For this we need a finite element inverse inequality (which holds also in the  parametric finite element space). Therefore we now restrict to $\bv=\bv_h \in \bV_h$.  Using a finite element inverse estimate for $\bv_h$ and for $\hat\bn_h^k \cdot \bv_h \in V_h^{2k}$ we get:
\begin{align*}
 \|\bn \cdot \bv_h^\ell\|_{H^1(\Gamma)} & \sim  \|\bn \cdot \bv_h\|_{H^1(\Gamma_h^k)} \lesssim \|\bn \cdot \bv_h\|_{L^2(\Gamma_h^k)} + \|\gradk (\bn \cdot \bv_h)\|_{L^2(\Gamma_h^k)} \\
 &  \lesssim \| \bv_h\|_{L^2(\Gamma_h^k)} + \|\gradk (\hat \bn_h^k \cdot  \bv_h)\|_{L^2(\Gamma_h^k)} \\
  & \lesssim \| \bv_h\|_{L^2(\Gamma_h^k)} + h^{-1} \|\hat \bn_h^k \cdot  \bv_h\|_{L^2(\Gamma_h^k)}\lesssim A(\bv_h,\bv_h)^\frac12.
\end{align*}
Combining these results completes the proof of \eqref{est1low}. 
\end{proof}

\subsection{Discrete inf-sup property}
For the inf-sup property we introduce $Q_{h,0}:= \{\, q_h \in Q_h~|~\int_{\Gamma_h^k} q_h \, ds_{hk}=0\,\}$. Our aim is to derive the following discrete inf-sup property:
\begin{equation} \label{infsup}
  \sup_{\bv_h \in \bV_h} \frac{b_h(\bv_h,q_h)}{\enorm \bv_h \enorm_k} \geq c_\ast \| q_h\|_{L^2(\Gamma_h^k)} \quad \text{for all}~q_h \in Q_{h,0},
\end{equation}
with $c_\ast >0$ \emph{in}dependent of $h$. Recall that the finite element spaces $\bV_h$ and $Q_h$ depend on $k$ and $m$. This inf-sup property is denoted by {\sc inf-sup}$(b_h,k,m)$, where the $b_h$ in this notation refers to the use of the bilinear form $b_h(\cdot,\cdot)$ in \eqref{infsup}. 

Below we relate (for $k \geq 2$) the discrete inf-sup property on $\Gamma_h^k$ to that  on $\Gamma_h^1=\Gamma_h$. For this it is convenient to introduce, for $v$ (or $\bv$) defined on $\Gamma_h^k$ the corresponding pull back to $\Gamma_h$ using $\tilde x:=\pi_k^{-1}(x)$, $x \in \Gamma_h^k$, $\tilde v(\tilde x):=v(x)$, $\tilde \bv (\tilde x):= \bv(x)$. We also use the notation $\enorm \cdot \enorm_1=:\enorm \cdot \enorm$. From \eqref{normeq} and $\|\bn\circ \pi_k^{-1} - \bn \|_{L^\infty(\Gamma_h^k)} \lesssim h$ we get the uniform (in $h,k$) norm equivalence
\begin{equation} \label{enormeq}
   \enorm \bv \enorm_k \sim \enorm \tilde \bv \enorm, \quad \bv \in H^1(\Gamma_h^k)^3.
\end{equation}

From \eqref{estbdifference} and a simple perturbation argument we obtain the following.
\begin{corollary} \label{corollary1} For $h$ sufficiently small:
\begin{equation} \label{eq1} \text{ {\sc inf-sup}$(b_h,k,m)$ holds iff {\sc inf-sup}$(b_h^\ast,k,m)$ holds.}
\end{equation}
 \end{corollary}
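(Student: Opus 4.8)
The plan is to treat \eqref{eq1} as a standard stability-under-perturbation statement, with the closeness estimate \eqref{estbdifference} as the only nontrivial ingredient. The key observation is that both {\sc inf-sup}$(b_h,k,m)$ and {\sc inf-sup}$(b_h^\ast,k,m)$ are formulated over \emph{exactly} the same test space $\bV_h$, the same trial space $Q_{h,0}$, and with the same norms $\enorm\cdot\enorm_k$ and $\|\cdot\|_{L^2(\Gamma_h^k)}$; only the bilinear form in the numerator changes, and by \eqref{estbdifference} the two forms differ by a quantity of order $h$ measured in precisely these norms. Hence passing from one inf-sup property to the other should cost only an $O(h)$ shift of the inf-sup constant, which is harmless for $h$ small.

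First I would fix $q_h \in Q_{h,0}$ and, for an arbitrary nonzero $\bv_h \in \bV_h$, split
\[
\frac{b_h(\bv_h,q_h)}{\enorm \bv_h \enorm_k} = \frac{b_h^\ast(\bv_h,q_h)}{\enorm \bv_h \enorm_k} + \frac{(b_h-b_h^\ast)(\bv_h,q_h)}{\enorm \bv_h \enorm_k}.
\]
By \eqref{estbdifference} the last term is bounded in absolute value by $C h\,\|q_h\|_{L^2(\Gamma_h^k)}$ with $C$ independent of $h$, $\bv_h$ and $q_h$. Since this bound is constant in $\bv_h$, taking the supremum over $\bv_h \in \bV_h$ gives
\[
\sup_{\bv_h \in \bV_h}\frac{b_h(\bv_h,q_h)}{\enorm \bv_h \enorm_k} \;\geq\; \sup_{\bv_h \in \bV_h}\frac{b_h^\ast(\bv_h,q_h)}{\enorm \bv_h \enorm_k} \;-\; C h\,\|q_h\|_{L^2(\Gamma_h^k)}.
\]

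Next I would insert the hypothesis. Assuming {\sc inf-sup}$(b_h^\ast,k,m)$ holds with some constant $c_\ast>0$, the first supremum on the right is at least $c_\ast\|q_h\|_{L^2(\Gamma_h^k)}$, so the left-hand side is bounded below by $(c_\ast - C h)\|q_h\|_{L^2(\Gamma_h^k)}$. Choosing $h$ small enough that $C h \leq \tfrac12 c_\ast$ yields {\sc inf-sup}$(b_h,k,m)$ with constant $\tfrac12 c_\ast$. The converse implication follows verbatim after interchanging the roles of $b_h$ and $b_h^\ast$, which is legitimate because the bound \eqref{estbdifference} is symmetric in the two forms. This establishes the equivalence \eqref{eq1}.

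I do not expect a genuine obstacle here; the argument is a textbook perturbation lemma. The only point requiring care is quantitative: one must use that the constant $C$ in \eqref{estbdifference} is indeed $h$-uniform (which it is, by Lemma~\ref{lemma2}), so that the threshold $h < c_\ast/C$ gives a meaningful, mesh-independent reading of ``for $h$ sufficiently small''. It is also worth recording that no relation between $b_h$ and $b_h^\ast$ beyond the $O(h)$ bound is needed; in particular the curvature and jump contributions from the partial integration identity \eqref{PI} enter only through \eqref{estbdifference} and need not be revisited.
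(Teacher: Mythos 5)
Your argument is exactly the ``simple perturbation argument'' the paper invokes for Corollary~\ref{corollary1}: split $b_h=b_h^\ast+(b_h-b_h^\ast)$, bound the difference by \eqref{estbdifference} uniformly in $\bv_h$, and absorb the resulting $O(h)\|q_h\|_{L^2(\Gamma_h^k)}$ shift of the inf-sup constant for $h$ small, with symmetry of the bound giving the converse. The proposal is correct and matches the paper's (unelaborated) proof.
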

\ \\
For $k \geq 2$  we now relate {\sc inf-sup}$(b_h^\ast,k,m)$  to {\sc inf-sup}$(b_h^\ast,1,m)$:
\begin{equation}
 \label{infsup1}
  \sup_{\bv_h \in \tilde \bV_h} \frac{\int_{\Gamma_h} \divh \bv_h \, q_h \, ds_h}{\enorm \bv_h \enorm} \geq c_\ast \| q_h\|_{L^2(\Gamma_h)} \quad \text{for all}~q_h \in \tilde Q_{h,0},
\end{equation}
with $c_\ast >0$ independent of $h$. Recall that $\tilde \bV_h \times \tilde Q_h$ is the standard Taylor-Hood pair on $\Gamma_h$ (with velocity finite elements of degree $m$). 
\begin{lemma} \label{lemma3}
 For $h$ sufficiently small:
\begin{equation} \label{eq2} \text{ {\sc inf-sup}$(b_h^\ast,k,m)$ holds iff {\sc inf-sup}$(b_h^\ast,1,m)$ holds.}
\end{equation}
\end{lemma}
\begin{proof} 
 For functions $v$ on $\Gamma_h^k$ we use the correspondence $\tilde v (\tilde x)= v(x)$ introduced above.
 Take $\bv_h \in \bV_h$ with corresponding $\tilde \bv_h \in \tilde \bV_h$. Note that $\bv_h \to \tilde \bv_h$ and 
 $q_h \to \tilde q_h$ are bijections  $\bV_h \to \tilde \bV_h$ and $Q_h \to \tilde Q_h$, respectively. Using $\divk \bv_h = \tr (\gradk \bv_h)$, $\divh \tilde \bv_h = \tr (\gradh \tilde \bv_h)$ and the estimate \eqref{gradbound2} we get  
 \[
  \left| \divk \bv_h(x) - \divh \tilde \bv_h (\tilde x)\right| \lesssim h \|\gradk \bv_h (x)\|, \quad x \in \Gamma_h^k.
 \]
Using this and the estimate \eqref{estmu} for the change in surface measures on $\Gamma_h^k$ and $\Gamma_h$ we get  
\begin{equation} \label{KK2}
 \left| \int_{\Gamma_h^k} \divk \bv_h \, q_h \, ds_{hk}  - \int_{\Gamma_h} \divh \tilde \bv_h \, \tilde q_h \, ds_h \right| \lesssim h \enorm \bv_h \enorm_k \|q_h\|_{L^2(\Gamma_h^k)}.
\end{equation}
This estimate also holds if we replace $\enorm \bv_h \enorm_k \|q_h\|_{L^2(\Gamma_h^k)}$ by 
$\enorm \tilde \bv_h \enorm \|\tilde q_h\|_{L^2(\Gamma_h)}$, cf. \eqref{enormeq}. We have to deal with a minor technical issue related to the fact that $q_h \in Q_{h,0}$ does not necessarily imply $\tilde q_h \in \tilde Q_{h,0}$. Assume that {\sc inf-sup}$(b_h^\ast,1,m)$ holds. Take $q_h \in Q_{h,0}$, i.e., $\int_{\Gamma_h^k} q_h \, ds_{kh} =0$, with corresponding $\tilde q_h \in \tilde Q_h$. Define $c_q:=- \frac{1}{|\Gamma_h|} \int_{\Gamma_h} \tilde q_h \, ds_h$. Then we have, cf. \eqref{estmu}, $|c_q| \lesssim h^{k+1} \|q_h\|_{L^2(\Gamma_h^k)}$ and $\tilde q_h + c_q \in \tilde Q_{h,0}$. Hence, there exists $c_\ast >0$ independent of $h$ and $\bv_h \in  \bV_h$ such that 
\begin{align*}
  \int_{\Gamma_h^k} \divk \bv_h \, q_h \, ds_{hk} & 
   \geq \int_{\Gamma_h} \divh \tilde \bv_h (\tilde q_h +c_q)\, ds_h - c h\enorm \bv_h \enorm_k \|q_h\|_{L^2(\Gamma_h^k)}\\
   & \geq c_\ast \enorm \tilde \bv_h \enorm \| \tilde q_h +c_q\|_{L^2(\Gamma_h)} - c h\enorm \bv_h \enorm_k \|q_h\|_{L^2(\Gamma_h^k)} \\ & \geq (\tilde c - \hat c h)\enorm  \bv_h \enorm_k \| q_h\|_{L^2(\Gamma_h^k)}
\end{align*}
with suitable constants $\hat c$ and $\tilde c >0$ independent of $h$. It follows that {\sc inf-sup}$(b_h^\ast,k,m)$ holds.  Very similar arguments can be used to prove the implication in the other direction. 
\end{proof}
\ \\
From Corollary~\ref{corollary1} and Lemma~\ref{lemma3} we obtain the following result.
\begin{corollary} \label{corollary2} For $h$ sufficiently small:
\begin{equation} \label{equivess} \text{{\sc inf-sup}$(b_h,k,m)$ holds iff {\sc inf-sup}$(b_h,1,m)$ holds.}
\end{equation}
 \end{corollary}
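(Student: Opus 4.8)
The plan is to obtain \eqref{equivess} purely by transitivity, composing the two equivalences already established in Corollary~\ref{corollary1} and Lemma~\ref{lemma3}. No new analysis is required: the properties {\sc inf-sup}$(b_h,k,m)$ and {\sc inf-sup}$(b_h,1,m)$ are connected through the two intermediate properties {\sc inf-sup}$(b_h^\ast,k,m)$ and {\sc inf-sup}$(b_h^\ast,1,m)$, and the passage between consecutive members of this chain is exactly what the preceding results supply — Corollary~\ref{corollary1} bridges $b_h$ and $b_h^\ast$ at a fixed geometry degree, while Lemma~\ref{lemma3} bridges geometry degrees $k$ and $1$ for the form $b_h^\ast$.

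Concretely, I would proceed in three steps. First, apply Corollary~\ref{corollary1} at the given geometry degree $k$ to get {\sc inf-sup}$(b_h,k,m)$ $\Leftrightarrow$ {\sc inf-sup}$(b_h^\ast,k,m)$. Second, invoke Lemma~\ref{lemma3} to get {\sc inf-sup}$(b_h^\ast,k,m)$ $\Leftrightarrow$ {\sc inf-sup}$(b_h^\ast,1,m)$. Third, apply Corollary~\ref{corollary1} once more, now specialized to $k=1$; here one uses that $\Gamma_h^1=\Gamma_h$ and that the estimate \eqref{estbdifference} underpinning Corollary~\ref{corollary1} is valid for every $k\geq 1$, so the corollary reads {\sc inf-sup}$(b_h^\ast,1,m)$ $\Leftrightarrow$ {\sc inf-sup}$(b_h,1,m)$. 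Composing these three biconditionals yields {\sc inf-sup}$(b_h,k,m)$ $\Leftrightarrow$ {\sc inf-sup}$(b_h,1,m)$, which is precisely \eqref{equivess}.

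There is essentially no genuine obstacle remaining, since all the analytic content has already been absorbed into the perturbation bound \eqref{estbdifference} and the change-of-surface argument behind Lemma~\ref{lemma3}. The only point demanding a little care is the quantifier ``for $h$ sufficiently small'': each of the three equivalences is guaranteed only below its own threshold, so I would take the minimum of these finitely many positive thresholds, ensuring that all three biconditionals hold simultaneously and the composition is legitimate. I would also emphasize that the inf-sup constants are not claimed to be preserved along the chain — only the \emph{existence} of some positive, $h$-independent constant is transported through each step — which is exactly the content asserted in \eqref{equivess}.
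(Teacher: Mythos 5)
Your proposal is correct and follows exactly the paper's route: Corollary~\ref{corollary1} (applied at degree $k$ and again at $k=1$, which is legitimate since \eqref{estbdifference} holds for all $k\geq 1$) combined with Lemma~\ref{lemma3} by transitivity, taking the minimum of the finitely many $h$-thresholds. Your added remarks on the thresholds and on not preserving the inf-sup constants are consistent with, and slightly more explicit than, the paper's one-line justification.
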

\ \\
In the analysis above, to derive the result \eqref{equivess}  we use $b_h^*(\cdot,\cdot)$ because a direct application of perturbation arguments to $b_h(\cdot,\cdot)$, without the partial integration formula \eqref{PI}, does not yield satisfactory results. 
 We now show that the property {\sc inf-sup}$(b_h,1,m)$ indeed holds. The analysis is along the same lines as for the Taylor-Hood pair in Euclidean domains in $\R^d$, cf. \cite{Ern04,GuzmanOlshanskii}. 
\begin{theorem} For $h$ sufficiently small the property {\sc inf-sup}$(b_h,1,m)$  holds, i.e.:
\begin{equation} \label{infsupA}
  \sup_{\bv_h \in \tilde \bV_h} \frac{\int_{\Gamma_h} \bv_h \cdot \gradh q_h \, ds_h}{\enorm \bv_h \enorm} \geq c_\ast \| q_h\|_{L^2(\Gamma_h)} \quad \text{for all}~q_h \in \tilde Q_{h,0},
\end{equation}
with $c_\ast >0$ independent of $h$.
 \end{theorem}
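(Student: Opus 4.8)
The plan is to prove \eqref{infsupA} by the two-level argument of Verf\"urth, exactly as for the Euclidean Taylor-Hood pair (cf. \cite{Ern04,Verfuerth84}), treating the curvature of $\Gamma_h$ and the exact normal $\bn$ entering $\enorm\cdot\enorm$ as $O(h)$ perturbations. Note first that for $k=1$ the form $b_h(\bv,q)=\int_{\Gamma_h}\bv\cdot\gradh q\,ds_h$ is an integral over $\Gamma_h$ itself and carries no geometric crime; the only genuinely surface-specific quantities are the penalty part $h^{-2}\|\bn\cdot\bv\|_{L^2(\Gamma_h)}^2$ of $\enorm\bv\enorm^2$ and the transfer of the continuous inf-sup from $\Gamma$ to $\Gamma_h$. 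Throughout fix $q_h\in\tilde Q_{h,0}$ and abbreviate $s:=\|q_h\|_{L^2(\Gamma_h)}$ and $g:=\|h\gradh q_h\|_{L^2(\Gamma_h)}$; a finite element inverse inequality on the quasi-uniform mesh gives $g\lesssim s$.

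The first ingredient is a local control of the pressure gradient. By the classical local analysis of the Taylor-Hood pair on the flat triangulation $\T_h$, which uses only $m\geq2$ and is purely Euclidean on each flat face, there is $\bv_1\in\tilde\bV_h$ built from the associated (edge/interior) bubble functions with $b_h(\bv_1,q_h)\geq c_1 g^2$ and $\|\bv_1\|_{H^1(\Gamma_h)}\lesssim g$. Taking the nodal values of $\bv_1$ tangential to $\Gamma$ — an $O(h)$ modification of the Euclidean construction — gives $\|\bn\cdot\bv_1\|_{L^2(\Gamma_h)}\lesssim h\,\|\bv_1\|_{L^2(\Gamma_h)}$ via \eqref{demlow2}, so that the penalty part of the norm is harmless and $\enorm\bv_1\enorm\lesssim g$.

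The second ingredient transfers the continuous inf-sup. Set $\hat q:=q_h^\ell-|\Gamma|^{-1}\int_\Gamma q_h^\ell\,ds\in L^2_0(\Gamma)$; the correction is $O(h^2)$ by \eqref{estmu} and is invisible to $b$ since constants have vanishing surface gradient, cf. \eqref{Bform}. Applying \eqref{LBBcont} to $\hat q$ yields $\bw_T\in\bV_T$ with $\|\bw_T\|_{H^1(\Gamma)}=\|\hat q\|_{L^2(\Gamma)}$ and $b(\bw_T,\hat q)\geq c\,\|\hat q\|_{L^2(\Gamma)}^2\gtrsim s^2$. Put $\bw:=\bw_T^e|_{\Gamma_h}$; since $\bn(x)=\bn(\pi(x))$ one has $\bP\bw=\bw$ and $\bn\cdot\bw\equiv0$ on $\Gamma_h$. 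Let $I_h\bw\in\tilde\bV_h$ be a Scott-Zhang interpolant. Then $\enorm I_h\bw\enorm\lesssim\|\bw\|_{H^1(\Gamma_h)}\lesssim s$, the penalty part being controlled through $\bn\cdot I_h\bw=\bn\cdot(I_h\bw-\bw)$ and the $L^2$-interpolation estimate; moreover \eqref{estbdifference2} (applicable since $\bP\bw=\bw$) gives $b_h(\bw,q_h)\geq b(\bw_T,\hat q)-c\,h\,\|\bw\|_{H^1(\Gamma_h)}\,s\geq\tfrac12cs^2$ for $h$ small, while the interpolation error obeys $|b_h(\bw-I_h\bw,q_h)|\lesssim\|h\gradh q_h\|_{L^2(\Gamma_h)}\,\|\bw\|_{H^1(\Gamma_h)}\lesssim g\,s$, whence $b_h(I_h\bw,q_h)\geq\tfrac12cs^2-C\,g\,s$.

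Finally, testing with $\bv_h:=I_h\bw+\gamma\bv_1$ for a fixed $\gamma>0$ gives $b_h(\bv_h,q_h)\geq\tfrac12cs^2-C\,g\,s+\gamma c_1 g^2$; a Young inequality and a sufficiently large $\gamma$ absorb the cross term and leave $b_h(\bv_h,q_h)\gtrsim s^2$, while $\enorm\bv_h\enorm\lesssim s+\gamma g\lesssim s$ by $g\lesssim s$. Dividing proves \eqref{infsupA} with an $h$-independent constant. I expect the real work to sit in the first ingredient — the bubble construction realizing the $\|h\gradh q_h\|$ lower bound, which is precisely where $m\geq2$ is used — while the remaining difficulty is bookkeeping: checking that each geometric perturbation (the measure mismatch, the non-tangentiality of $\bw$ and of the bubble field relative to $\Gamma_h$, and the resulting normal contributions to $\enorm\cdot\enorm$) is uniformly $O(h)$ and hence absorbable once $h$ is small enough.
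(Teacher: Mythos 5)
Your proposal is correct and follows essentially the same route as the paper: a local edge-bubble construction yielding the weak inf-sup bound in terms of $h\|\gradGh q_h\|_{L^2(\Gamma_h)}$, transfer of the continuous inf-sup \eqref{LBBcont} via a quasi-interpolant with the normal (penalty) contributions controlled as $O(h)$ geometric perturbations, and a Verf\"urth-type combination of the two. The only cosmetic difference is that you combine the two test functions additively with a Young inequality and the inverse estimate $h\|\gradGh q_h\|\lesssim\|q_h\|$, whereas the paper absorbs the cross terms through the inequality $\chi_{q_h}\geq c_1\|q_h\|_{L^2(\Gamma_h)}-c_2\chi_{q_h}$; these are equivalent.
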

\begin{proof}  First we consider an inf-sup estimate with the norm $\| q_h\|_{L^2(\Gamma_h)}$ replaced by a weaker one:
 \begin{equation} \label{infsupB}
  \sup_{\bv_h \in \tilde \bV_h} \frac{\int_{\Gamma_h} \bv_h \cdot \gradh q_h \, ds_h}{\enorm \bv_h \enorm} \geq c_\ast h \| \gradh q_h\|_{L^2(\Gamma_h)} \quad \text{for all}~q_h \in \tilde Q_{h},
\end{equation}
with $c_\ast >0$ independent of $h$. We show that this estimate holds using the same arguments used in the Euclidean case in e.g. \cite{GuzmanOlshanskii}. The set of edges in $\cT_h$ is denoted by $\cE_h$. For $E \in \cE_h$ the domain $\omega_E$ is the union of the two triangles that have $E$ as common edge.  We denote  the midpoint of $E$ by $x_E$. A unit tangent vector of $E$ is denoted by $\bt_E$ and $\phi_E$ denotes the continuous piecewise quadratic function on $\Gamma_h$ that is zero on $\partial \omega_E$, with $\phi_E(x_E)=1$ and extended by zero outside $\omega_E$. Take $q_h \in \tilde Q_h$. We define $\psi_E(x):= \phi_E(x) \big( \bt_E \cdot \gradh q_h(x) \big)$, $x \in \omega_E$. This function is zero on $\partial \omega_E$ and extended by zero outside $\omega_E$. Furthermore, due to the continuity of $\bt_E \cdot \gradh q_h$ across $E$ the function $\psi_E$ is continuous and piecewise polynomial of degree at most $m$. Furthermore, one easily verifies the estimate
\[
 \|\psi_E\|_{L^2(T)} + h \|\gradh \psi_E\|_{L^2(T)} \lesssim \|\gradh q_h\|_{L^2(T)}. 
\]
We define $\bv_h \in \tilde \bV_h$ by
\[
  \bv_h(x):= h^2 \sum_{E \in \cE_h} \psi_E(x) \bt_E, \quad x \in \Gamma_h.
\]
For $x \in T$ we have  $\bv_h(x) = \sum_{E \in (\cE_h \cap T)} \psi_E(x) \bt_E$ and thus $\bv_h(x) \cdot \bn_h=0$, 
where $\bn_h$ denotes the normal on $T$. 
For this specific choice  of $\bv_h$ we have
\begin{equation} \begin{split}
\|\bv_h\|_{H^1(\Gamma_h)}^2 & = \sum_{T \in \cT_h}  \|\bv_h\|_{H^1(T)}^2 = h^4 \sum_{T \in \cT_h} \big\|\sum_{E \in (\cE_h \cap T)} \psi_E \bt_E\big\|_{H^1(T)}^2  \\ & \lesssim h^2 \sum_{T \in \cT_h}\|\gradh q_h\|_{L^2(T)}^2 \sim h^2 \|\gradh q_h\|_{L^2(\Gamma_h)}^2, \end{split}
\end{equation}
and $h^{-1} \|\bn \cdot \bv_h\|_{L^2(\Gamma_h)}=h^{-1} \|(\bn- \bn_h) \cdot \bv_h\|_{L^2(\Gamma_h)} \lesssim \|\bv_h\|_{L^2(\Gamma_h)} \lesssim h^2 \|\gradh q_h\|_{L^2(\Gamma_h)}$ and thus
\begin{equation} \label{pp1}
   \enorm \bv_h \enorm \lesssim h \|\gradh q_h\|_{L^2(\Gamma_h)}
\end{equation}
holds. We also have
\begin{equation} \label{pp2} \begin{split}
 \int_{\Gamma_h} \bv_h \cdot \gradh q_h \, ds_h & = h^2 \sum_{T \in \cT_h} \sum_{E \in (\cE_h \cap T)}  (\bt_E \cdot \gradh q_h)^2 \phi_E \, ds_h   \\ & \sim h^2 \sum_{T \in \cT_h} \sum_{E \in (\cE_h \cap T)} (\bt_E \cdot \gradh q_h)^2 \, ds_h  \\ &  
 \sim h^2 \sum_{T \in \cT_h} \|\gradh q_h\|^2 \, ds_h \sim h^2 \|\gradh q_h\|_{L^2(\Gamma_h)}^2.
 \end{split}
\end{equation}
Combining the results in \eqref{pp1} and \eqref{pp2} completes the proof of \eqref{infsupB}. We now proceed using the inf-sup property \eqref{LBBcont} of the continuous problem and combine it with \eqref{infsupB} (``Verf\"urth trick'') and with pertubation arguments to control differences between quantities on $\Gamma_h$ and on $\Gamma$. Take $q_h \in \tilde Q_{h,0}$ and a constant $c_q$ such that $\int_\Gamma q_h^\ell + c_q \, ds=0$. Then $|c_q| \lesssim h^2 \|q_h^\ell\|_{L^2(\Gamma)}$ holds.  Due to \eqref{LBBcont} there exists $\bv= \bP\bv \in H^1(\Gamma)^3$ such that
\begin{equation} \label{CL} \begin{split}
  \int_{\Gamma} \bv \cdot \nabla_\Gamma q_h^\ell \, ds & = \|q_h^\ell+c_q\|_{L^2(\Gamma)}^2  \geq (1-ch^2)\|q_h\|_{L^2(\Gamma_h)}^2, \\
  \|\bv\|_{H^1(\Gamma)} & \lesssim \| q_h^\ell+c_q\|_{L^2(\Gamma)} \sim \| q_h\|_{L^2(\Gamma_h)}.
\end{split}    \end{equation}
We use a Clement type interpolation operator $I_h: H^1(\Gamma) \to V_h^1$ (i.e, continuous piecewiese linears on $\Gamma_h$), with properties $ \|I_h(v)\|_{H^1(\Gamma_h)} \lesssim \|v \|_{H^1(\Gamma)}$, $\|v^e - I_h(v)\|_{L^2(\Gamma_h)} \lesssim h \|v \|_{H^1(\Gamma)}$.  We now  choose $\bv_h:= I_h(\bv) \in \bV_h$ (componentwise application of $I_h$). For this $\bv_h$ we have $\|\bv\|_{H^1(\Gamma_h)} \lesssim  \|\bv \|_{H^1(\Gamma)} \lesssim \| q_h\|_{L^2(\Gamma_h)}$ and 
\[ 
\begin{split} 
h^{-1} \|\bn \cdot \bv_h\|_{L^2(\Gamma_h)} & = h^{-1}\|\bn \cdot (\bv^e -\bv_h)\|_{L^2(\Gamma_h)}  \lesssim  h^{-1}\| \bv^e -\bv_h\|_{L^2(\Gamma_h)} \\ & \lesssim  \|\bv \|_{H^1(\Gamma)} \lesssim  \| q_h\|_{L^2(\Gamma_h)},
\end{split} 
\]
and thus
\begin{equation} \label{pp7}
 \enorm \bv_h \enorm \lesssim \| q_h\|_{L^2(\Gamma_h)}
\end{equation}
holds. Define $\chi_{q_h}: = \sup_{\bw_h \in \tilde \bV_h} \frac{\int_{\Gamma_h} \bw_h \cdot \gradh q_h \, ds_h}{\enorm \bw_h \enorm}$. We use the splitting
\begin{equation} \label{pp8}
 \int_{\Gamma_h} \bv_h \cdot \gradh q_h \, ds_h =\int_{\Gamma_h} \bv^e \cdot \gradh q_h \, ds_h + \int_{\Gamma_h} (\bv_h-\bv^e) \cdot \gradh q_h \, ds_h.
\end{equation}
For the second term on the right-hand side we have using \eqref{infsupB} and \eqref{CL}:
\begin{equation} \label{55}
\left| \int_{\Gamma_h} (\bv_h-\bv^e) \cdot \gradh q_h \, ds_h \right| \lesssim h \|\bv \|_{H^1(\Gamma)} \|\gradh q_h\|_{L^2(\Gamma_h)} \lesssim \|q_h\|_{L^2(\Gamma_h)} \chi_{q_h}.
 \end{equation}
For the other term we use $\bP \bv^e = \bv^e$, $\gradh q_h(x)= \bP_h(\bI - d \bH) \gradG q_h^\ell (\pi(x))$, and thus
\[ \begin{split}
  \int_{\Gamma_h} \bv^e \cdot \gradh q_h \, ds_h & = \int_{\Gamma_h} \bv^e \cdot \gradG q_h^\ell(\pi(\cdot)) \, ds_h -
  \int_{\Gamma_h} \bv^e \cdot (\bP-\bP \bP_h \bP) \gradG q_h^\ell(\pi(\cdot)) \, ds_h \\
    & \quad -  \int_{\Gamma_h} \bv^e \cdot d \bP_h \bH \gradG q_h^\ell(\pi(\cdot)) \, ds_h.
\end{split} \]
With \eqref{demlow1}, \eqref{demlow5}, \eqref{CL} and $\|\bP-\bP \bP_h \bP\|_{L^\infty(\Gamma_h)} \lesssim h^2$ we get 
\begin{equation} \label{56} \begin{split}
 \int_{\Gamma_h} \bv^e \cdot \gradh q_h \, ds_h & \geq (1-ch^2)\|q_h\|_{L^2(\Gamma_h)}^2 - c h^2 \|\bv\|_{L^2(\Gamma)}\|\gradh  q_h\|_{L^2(\Gamma_h)} \\
  & \geq (1-ch^2)\|q_h\|_{L^2(\Gamma_h)}^2 -c h\|q_h\|_{L^2(\Gamma_h)} \chi_{q_h}. 
\end{split} \end{equation}
We insert the results \eqref{55} and \eqref{56} in \eqref{pp8} and divide by $\|q_h\|_{L^2(\Gamma_h)}$. Thus we obtain
\[
  \chi_{q_h} \geq c_1 (1-h^2)\|q_h\|_{L^2(\Gamma_h)} -c_2\chi_{q_h},
\]
with positive constants $c_1,c_2$ independent of $h$ and of $q_h$. From this the result \eqref{infsupA} follows.
\end{proof}

Hence, we have proved the following  result.
\begin{corollary} \label{corolinfsup} For  $h$ sufficiently small, the discrete inf-sup property \eqref{infsup} holds.
\end{corollary}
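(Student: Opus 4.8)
The claim is an immediate consequence of the two results just established, so the plan is simply to chain them rather than to prove anything new. Recall that the target estimate \eqref{infsup} is by definition the property {\sc inf-sup}$(b_h,k,m)$, and that Corollary~\ref{corollary2} asserts (for $h$ sufficiently small) the equivalence \eqref{equivess} of {\sc inf-sup}$(b_h,k,m)$ and {\sc inf-sup}$(b_h,1,m)$. Thus the entire task reduces to verifying {\sc inf-sup}$(b_h,1,m)$, and this is exactly the content of the preceding Theorem, namely the estimate \eqref{infsupA}.

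Concretely, I would first record the identification of the two levels. For $k=1$ the parametric objects collapse onto their untilded counterparts: since the vertices of $\Gamma_h$ lie on $\Gamma$, the piecewise linear interpolant $\pi_1$ of the closest point projection is affine and agrees with the identity at those vertices, hence $\pi_1$ is the identity on each triangle and $\Gamma_h^1=\Gamma_h$. Consequently $\bV_h=\tilde\bV_h$, $Q_{h,0}=\tilde Q_{h,0}$, $\enorm \cdot \enorm_1=\enorm \cdot \enorm$, and $b_h(\bv_h,q_h)=\int_{\Gamma_h} \bv_h\cdot \gradh q_h \, ds_h$. Under this identification the statement \eqref{infsup} with $k=1$ is word for word the assertion \eqref{infsupA}, which the Theorem proves; so {\sc inf-sup}$(b_h,1,m)$ holds with a constant $c_\ast>0$ independent of $h$. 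For the general case $k\geq 2$ I would then invoke \eqref{equivess}: since {\sc inf-sup}$(b_h,1,m)$ holds and the two properties are equivalent for $h$ small enough, {\sc inf-sup}$(b_h,k,m)$ — that is, \eqref{infsup} — holds as well, which completes the argument.

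There is no genuine obstacle at this stage: all of the substantive work has already been absorbed into the equivalence chain (Corollary~\ref{corollary1}, Lemma~\ref{lemma3}, and hence Corollary~\ref{corollary2}) and into the Euclidean-style Verf\"urth argument of the Theorem. The only point that deserves a moment's care is the bookkeeping of the ``$h$ sufficiently small'' thresholds, since the equivalence in Corollary~\ref{corollary2} and the Theorem each require their own smallness condition; the corollary then holds below the minimum of these thresholds. I would also simply note that the final constant $c_\ast$ in \eqref{infsup} is the one obtained by transporting the (uniform in $h$) inf-sup constant of the Theorem through the perturbation factors appearing in the proofs of Corollary~\ref{corollary1} and Lemma~\ref{lemma3}, which stay bounded below uniformly in $h$.
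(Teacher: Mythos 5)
Your proposal is correct and matches the paper exactly: the paper offers no separate proof beyond the phrase ``Hence, we have proved the following result,'' i.e.\ it likewise obtains \eqref{infsup} by combining the equivalence \eqref{equivess} of Corollary~\ref{corollary2} with the verification of {\sc inf-sup}$(b_h,1,m)$ in \eqref{infsupA}. Your additional remarks on the identification $\Gamma_h^1=\Gamma_h$ and the tracking of the smallness thresholds are accurate but not needed beyond what the paper already assumes.
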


\subsection{Discretization error analysis} \label{sectAnalysis}
As usual, the discretization error analysis is based on a Strang type lemma which bounds the discretization error in terms of an approximation error and a consistency error.
We define the bilinear form
\[
\mathcal{A}_h((\bu, p),(\bv, q)) := A_h(\bu,\bv) + b_h(\bv, p) + b_h(\bu, q), \quad \bu,\bv \in H^1(\Gamma_h^k)^3,~p,q \in H^1(\Gamma_h^k).
\]
On the pair of velocity-pressure spaces $H^1(\Gamma_h^k)^3 \times H^1(\Gamma_h^k)$ we use the norm $\enorm \cdot \enorm_k^2 + \|\cdot\|_{L^2(\Gamma_h^k)}^2$. We have the continuity estimates
\begin{align}
  |A_h(\bu,\bv)| &  \lesssim \enorm \bu \enorm_k \enorm \bv \enorm_k \quad \text{for}~\bu,\bv \in H^1(\Gamma_h^k)^3,\\
  | b_h(\bu,p)|  &  \lesssim \enorm \bu \enorm_k \|p\|_{L^2(\Gamma_h^k)}\quad \text{for}~\bu \in H^1(\Gamma_h^k)^3, p \in H^1(\Gamma_h^k). \label{Cb}
\end{align}
The bound for $A_h(\cdot,\cdot)$ is obvious, cf. \eqref{est1up}. The estimate for $b_h(\cdot,\cdot)$ follows from \eqref{estbdifference} and obvious estimates for $b_h^\ast(\cdot,\cdot)$. In Lemma~\ref{lemma1} it is shown that  $A_h(\cdot,\cdot)$ is elliptic on $\bV_h$. Furthermore, the bilinear form $b_h(\cdot,\cdot)$ has the discrete inf-sup property  \eqref{infsup} on the pair of finite element spaces $\bV_h \times Q_{h,0}$, cf. Corollary~\ref{corolinfsup}. 
From standard saddle point theory it follows that (for $h$ sufficiently small) the discrete stability estimate
\begin{equation} \label{eqinfsupAh}
\sup_{(\bv_h,q_h)  \in \bV_h \times Q_{h,0}} \frac{\mathcal{A}_h((\bu_h,p_h), (\bv_h,q_h)) }{\left(\enorm \bv_h \enorm_k^2 + \Vert q_h \Vert_{L^2(\Gamma_h^k)}^2\right)^\frac{1}{2}} \gtrsim \left(\enorm \bu_h \enorm_k^2 + \Vert p_h \Vert_{L^2(\Gamma_h^k)}^2\right)^\frac{1}{2}
\end{equation}
for all $(\bu_h,p_h) \in \bV_h \times Q_{h,0}$ holds. This and the continuity of  $\mathcal{A}_h(\cdot,\cdot)$ yield the following Strang-Lemma. Here and in the remainder we use that  the solution $(\bu_T, p)  \in \bV_T \times L^2_0(\Gamma)$ of \eqref{contform} is sufficiently regular, in particular $p \in H^1(\Gamma)$.

\begin{lemma}[Strang-Lemma] \label{stranglemma}
Let $(\bu_T, p)  \in \bV_T \times L^2_0(\Gamma)$ be the  solution of problem \eqref{contform} and $(\bu_h, p_h) \in \bU_h \times Q_{h,0}$ the solution of the finite element problem \eqref{discreteform1} . The following discretization error bound holds:
\begin{multline}\label{eqStrang}
\enorm\bu_T^e  - \bu_{h} \enorm_k + \Vert p^e - p_{h} \Vert_{L^2(\Gamma_h^k)}
\lesssim \min_{(\bv_h,q_h)  \in \bV_h \times Q_{h,0}} \left( \enorm\bu_T^e - \bv_h \enorm_k + \Vert p^e - q_{h} \Vert_{L^2(\Gamma_h^k)}\right) \\
+ \sup_{(\bv_h,q_h)  \in \bV_h \times Q_{h,0}} \frac{\vert \mathcal{A}_h((\bu_T^e,p^e), (\bv_h,q_h)) - (\bbf_h, \bv_h)_{L^2(\Gamma_h^k)} + (g_h, q_h)_{L^2(\Gamma_h^k)} \vert}{\left(\enorm \bv_h \enorm_k^2 + \Vert q_h \Vert_{L^2(\Gamma_h^k)}^2\right)^\frac{1}{2}}.
\end{multline}
\end{lemma}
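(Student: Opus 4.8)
The plan is to prove the Strang-type estimate \eqref{eqStrang} by the standard saddle-point argument, using the discrete stability \eqref{eqinfsupAh} together with continuity of $\mathcal{A}_h(\cdot,\cdot)$. First I would fix an arbitrary pair $(\bv_h,q_h) \in \bV_h \times Q_{h,0}$ and introduce the discrete error split relative to this pair, writing $\enorm \bu_T^e - \bu_h \enorm_k + \|p^e - p_h\|_{L^2(\Gamma_h^k)} \leq \enorm \bu_T^e - \bv_h \enorm_k + \|p^e - q_h\|_{L^2(\Gamma_h^k)} + \enorm \bv_h - \bu_h \enorm_k + \|q_h - p_h\|_{L^2(\Gamma_h^k)}$ by the triangle inequality. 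The first two terms already appear on the right-hand side of \eqref{eqStrang}, so the task reduces to bounding the purely discrete quantity $\enorm \bv_h - \bu_h \enorm_k + \|q_h - p_h\|_{L^2(\Gamma_h^k)}$, which involves only finite element functions.

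Next I would apply the discrete stability estimate \eqref{eqinfsupAh} to the discrete pair $(\bv_h - \bu_h, q_h - p_h) \in \bV_h \times Q_{h,0}$. This bounds $\big(\enorm \bv_h - \bu_h \enorm_k^2 + \|q_h - p_h\|_{L^2(\Gamma_h^k)}^2\big)^{1/2}$ by the supremum over test pairs $(\bw_h,r_h)$ of $\mathcal{A}_h((\bv_h - \bu_h, q_h - p_h),(\bw_h,r_h))$ divided by the test-pair norm. By linearity of $\mathcal{A}_h(\cdot,\cdot)$ in its first argument, I would rewrite the numerator as $\mathcal{A}_h((\bv_h,q_h),(\bw_h,r_h)) - \mathcal{A}_h((\bu_h,p_h),(\bw_h,r_h))$. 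The second term is supplied by the discrete equations \eqref{discreteform1}, which exactly state $\mathcal{A}_h((\bu_h,p_h),(\bw_h,r_h)) = (\bbf_h,\bw_h)_{L^2(\Gamma_h^k)} - (g_h,r_h)_{L^2(\Gamma_h^k)}$ (after collecting the two lines of \eqref{discreteform1} into the combined bilinear form with the sign convention built into $\mathcal{A}_h$). Substituting this converts the numerator into $\mathcal{A}_h((\bv_h,q_h),(\bw_h,r_h)) - (\bbf_h,\bw_h)_{L^2(\Gamma_h^k)} + (g_h,r_h)_{L^2(\Gamma_h^k)}$.

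Then I would insert $(\bu_T^e,p^e)$ by adding and subtracting $\mathcal{A}_h((\bu_T^e,p^e),(\bw_h,r_h))$, obtaining the decomposition $\mathcal{A}_h((\bv_h - \bu_T^e, q_h - p^e),(\bw_h,r_h))$ plus the consistency term $\mathcal{A}_h((\bu_T^e,p^e),(\bw_h,r_h)) - (\bbf_h,\bw_h)_{L^2(\Gamma_h^k)} + (g_h,r_h)_{L^2(\Gamma_h^k)}$. For the first piece I would invoke the continuity of $\mathcal{A}_h(\cdot,\cdot)$ — which follows from the bounds on $A_h$ and \eqref{Cb} for $b_h$ — to estimate it by $\big(\enorm \bu_T^e - \bv_h \enorm_k + \|p^e - q_h\|_{L^2(\Gamma_h^k)}\big)$ times the test-pair norm; dividing by that norm and taking the supremum reproduces exactly the approximation-error term of \eqref{eqStrang}. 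The second piece, after dividing by the test-pair norm and taking the supremum, is precisely the consistency supremum in \eqref{eqStrang}. Combining these with the triangle inequality from the first step yields the claimed bound.

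The only subtle point, and thus the main thing to get right, is the bookkeeping of the mean-value constraint and the test space: the inf-sup and stability estimate \eqref{eqinfsupAh} are stated over the zero-mean pressure space $Q_{h,0}$, so I must ensure the error quantity $q_h - p_h$ genuinely lies in $Q_{h,0}$ (both $q_h$ and $p_h$ have zero discrete mean) and that the supremum is taken over the correct constrained test space. Everything else is routine continuity and linearity manipulation; no genuinely hard estimate arises here, since the three structural ingredients — discrete stability, continuity of $\mathcal{A}_h$, and the exact discrete equations — have all been established or assumed earlier.
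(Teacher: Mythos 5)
Your proposal is correct and follows exactly the route the paper indicates: the triangle inequality, the discrete stability estimate \eqref{eqinfsupAh} applied to $(\bv_h-\bu_h,\,q_h-p_h)\in \bV_h\times Q_{h,0}$, substitution of the discrete equations \eqref{discreteform1}, and the continuity of $\mathcal{A}_h(\cdot,\cdot)$ (in particular \eqref{Cb}, which applies since $p^e\in H^1$) to split off the approximation and consistency terms. The paper leaves this standard saddle-point argument implicit, and your write-up, including the check that $q_h-p_h$ has zero discrete mean, fills it in correctly.
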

\ \\
Concerning the approximation error term in  the Strang-Lemma we note the following. Standard Lagrange finite element theory, cf. also \cite{demlow2009higher}, yields that for the parametric space $\pi_k(V_h^m):=\{\, v_h \in C(\Gamma_h^k)~|~v_h \circ \pi_k^{-1} \in V_h^m\,\}$, with  $V_h^m$ the standard Lagrange space on $\Gamma_h$ (polynomials of degree $m$), we have,
for $v \in H^{r+1}(\Gamma)$, 
\[
   \min_{v_h \in \pi_k(V_h^m)} \big(\|v^e - v_h\|_{L^2(\Gamma_h^k)} + h\|\gradk (v^e-v_h)\|_{L^2(\Gamma_h^k)}\big) \lesssim h^{r+1} \|v\|_{H^{r+1}(\Gamma)}, \quad  0 \leq r \leq m.
\]
Using this and the definition $\enorm \bv \enorm_k^2:= \|\bv\|_{H^1(\Gamma_h^k)}^2+ h^{-2}\|\bn \cdot \bv\|_{L^2(\Gamma_h^k)}^2$ one obtains for  $1 \leq r \leq m$, 
provided $\bu_T \in H^{r+1}(\Gamma)^3$ and $p \in H^{r}(\Gamma)$,  the following optimal approximation error bound:
\begin{equation} \label{Eq2}
\min_{(\bv_h,q_h) \in \bV_h \times Q_{h,0}} \left( \enorm \bu_T^e - \bv_h \enorm_k + \Vert p^e - q_h \Vert_{L^2(\Gamma_h^k)} \right)
\lesssim   h^{r} \left(\Vert \bu_T \Vert_{H^{r+1}(\Gamma)} + \Vert p \Vert_{H^{r}(\Gamma)}\right).
\end{equation}

We now consider  the consistency term on the right-hand side of \eqref{eqStrang}. 
We define, for $\bv, \bw \in H^1(\Gamma_h^k)^3$, $q \in H^1(\Gamma_h^k)$:
\begin{align*}
 G_a(\bv,\bw) &:= a_{h}(\bv,\bw) - a(\bP\bv^\ell,\bP \bw^\ell), \\
 G_{b}(\bv,q) &:= b_h(\bv, q) - b(\bv^\ell, q^\ell), \\ G_f(\bw) &  := (\bbf,\bw^\ell)_{L^2(\Gamma)} - (\bbf_h,\bw)_{L^2(\Gamma_h^k)}, \\
  G_g(q)   &:= (g_h,q)_{L^2(\Gamma_h^k)} - (g,q^\ell)_{L^2(\Gamma)}.
\end{align*}
Let $(\bu_T, p)  \in \bV_T \times L^2_0(\Gamma)$ be the unique solution of problem \eqref{contform} and $(\bv_h,q_h) \in \bU_h \times Q_h$.
The consistency term in \eqref{eqStrang} can be written as
\begin{equation} \label{consterm1} \begin{split}
&\mathcal{A}_h((\bu_T^e,p^e), (\bv_h,q_h)) - (\bbf_h, \bv_h)_{L^2(\Gamma_h^k)} + (g_h, q_h)_{L^2(\Gamma_h^k)}  \\
& = A_h(\bu_T^e, \bv_h) + b_h(\bv_h,p^e) + b_h(\bu_T^e, q_h)  - (\bbf_h, \bv_h)_{L^2(\Gamma_h^k)} + (g_h, q_h)_{L^2(\Gamma_h^k)}  \\
&\qquad + \underbrace{(\bbf, \bP \bv_h^\ell)_{L^2(\Gamma)} -  (g, q_h^\ell)_{L^2(\Gamma)}  -  a(\bu_T,\bP\bv_h^\ell) - b(\bP\bv_h^\ell, p) - b(\bu_T, q_h^\ell)}_{=0} \\
&= G_a(\bu_T^e, \bv_h) + G_b(\bv_h,p^e) + G_b(\bu_T^e, q_h)+  k_h(\bu_T^e, \bv_h) + G_f(\bv_h) + G_g(q_h).
\end{split}
\end{equation}
From \eqref{u1} and $\enorm \bu_T^e \enorm_k = \|\bu_T^e\|_{H^1(\Gamma_h^k)} \sim \|\bu_T \|_{H^1(\Gamma)}$ we get
\begin{equation} \label{E1}
 | G_a(\bu_T^e, \bv_h)| \lesssim h^k \|\bu_T\|_{H^1(\Gamma)} \enorm \bv_h\enorm_k.
 \end{equation}
Using \eqref{estbdifference1} and \eqref{estbdifference2} we obtain 
\begin{equation} \label{E2}
 |G_b(\bv_h,p^e)| \lesssim h^k \enorm \bv_h\enorm_k \|p\|_{H^1(\Gamma)}, \quad  |G_b(\bu_T^e, q_h)|  \lesssim h^k \|\bu_T\|_{H^1(\Gamma)} \|q_h\|_{L^2(\Gamma_h^k)}.
\end{equation}
For the penalty term we get, using \eqref{betternormal}:
\begin{equation} \label{E3} \begin{split}
 |  k_h(\bu_T^e, \bv_h) | & = h^{-2}\big| \int_{\Gamma_h^k} (\hat \bn_h^k  - \bn)\cdot\bu_T^e (\hat \bn_h^k \cdot \bv_h) \, ds_{hk}\big|
 \\
 & \lesssim h^{-1} \|\hat \bn_h^k  - \bn\|_{L^\infty(\Gamma_h^k)} \|\bu_T\|_{L^2(\Gamma)} \enorm \bv_h \enorm_k \lesssim h^k\|\bu_T\|_{L^2(\Gamma)} \enorm \bv_h \enorm_k. 
\end{split}
\end{equation}
Note that in the last estimate in \eqref{E3}, in order to obtain a bound of order $h^k$ we need the ``improved'' normal approximation $\hat \bn_h^k$ with error bound of order $h^{k+1}$. For the data errors we assume
\begin{equation} \label{dataerror}
 \| \bbf^e -  \bbf_h\|_{L^2(\Gamma_h^k)} \lesssim h^k \|\bbf\|_{L^2(\Gamma)}, \quad \|g^e - g_h |_{L^2(\Gamma_h^k)}\lesssim h^k \|g \|_{L^2(\Gamma)},
\end{equation}
which then yield the bound
\begin{equation} \label{E4}
 |G_f(\bv_h) + G_g(q_h)| \lesssim h^k \big(\|\bbf\|_{L^2(\Gamma)}\enorm \bv_h \enorm_k +\|g \|_{L^2(\Gamma)} \|q_h\|_{L^2(\Gamma_h^k)}\big). 
\end{equation}
Combining the results above we obtain the following result for the consistency term in \eqref{eqStrang}.
\begin{lemma}\label{lemconssis}
Let $\bbf_h$ and $g_h$ be approximations of $\bbf$ and $g$ such that \eqref{dataerror} holds. For the  solution $(\bu_T, p)  \in \bV_T \times L^2_0(\Gamma)$ of problem \eqref{contform} the following holds:
\begin{equation*} \begin{split}
 & \sup_{(\bv_h,q_h)  \in \bV_h \times Q_{h,0}} \frac{\vert \mathcal{A}_h((\bu_T^e,p^e), (\bv_h,q_h)) - (\bbf_h, \bv_h)_{L^2(\Gamma_h^k)} + (g_h, q_h)_{L^2(\Gamma_h^k)} \vert}{\left(\enorm \bv_h \enorm_k^2 + \Vert q_h \Vert_{L^2(\Gamma_h^k)}^2\right)^\frac{1}{2}}\\
 & \qquad \qquad \qquad \lesssim h^k\big( \|\bu_T\|_{H^1(\Gamma)} +\|p\|_{H^1(\Gamma)}+ \|\bbf\|_{L^2(\Gamma)}+ \|g \|_{L^2(\Gamma)}\big).
\end{split}
\end{equation*}
\end{lemma}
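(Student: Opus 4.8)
The plan is to start from the algebraic decomposition of the consistency numerator already recorded in \eqref{consterm1}, which rewrites
\[
\mathcal{A}_h((\bu_T^e,p^e),(\bv_h,q_h)) - (\bbf_h,\bv_h)_{L^2(\Gamma_h^k)} + (g_h,q_h)_{L^2(\Gamma_h^k)}
\]
as the sum $G_a(\bu_T^e,\bv_h) + G_b(\bv_h,p^e) + G_b(\bu_T^e,q_h) + k_h(\bu_T^e,\bv_h) + G_f(\bv_h) + G_g(q_h)$. The conceptual heart of that identity is that $(\bu_T,p)$ solves \eqref{contform}: inserting the lifted test functions $\bP\bv_h^\ell \in \bV_T$ and $q_h^\ell \in L^2(\Gamma)$ into the two equations of \eqref{contform} makes the grouped continuous terms cancel against the continuous data pairings, so that only the geometric defects $G_a,G_b$, the data defects $G_f,G_g$, and the penalty term survive. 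I would first verify this cancellation explicitly, using that $b(\bP\bv_h^\ell,p)=b(\bv_h^\ell,p)$ (the form $b$ depends only on the tangential part) and that $\bu_T\cdot\bn=0$.

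With the decomposition in place, the proof reduces to bounding the six terms, each by $h^k$ times a fixed norm of the data times either $\enorm\bv_h\enorm_k$ or $\|q_h\|_{L^2(\Gamma_h^k)}$. For $G_a$ I would invoke \eqref{u1} together with the norm equivalence $\enorm\bu_T^e\enorm_k\sim\|\bu_T\|_{H^1(\Gamma)}$, yielding \eqref{E1}; the two $G_b$ contributions follow from \eqref{estbdifference1} and from \eqref{estbdifference2} respectively (the latter using the tangentiality $\bP\bv=\bv$ applied to $\bu_T^e$), giving \eqref{E2}; and the data defects $G_f+G_g$ are controlled by the approximation assumption \eqref{dataerror}, giving \eqref{E4}. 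These are all Cauchy--Schwarz applications combined with the cited estimates.

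The one genuinely delicate term is the penalty contribution $k_h(\bu_T^e,\bv_h)$, and this is where I expect the main obstacle. Since $\bu_T$ is tangential and we use constant normal extension, $\bu_T^e\cdot\bn=0$ exactly, so I may replace $\hat\bn_h^k$ by $\hat\bn_h^k-\bn$ in the $\bu_T^e$ factor at no cost. Then the penalty scaling $\eta=h^{-2}$ combines with $\|\hat\bn_h^k-\bn\|_{L^\infty(\Gamma_h^k)}\lesssim h^{k+1}$ from \eqref{betternormal} and with $\|\hat\bn_h^k\cdot\bv_h\|_{L^2(\Gamma_h^k)}\lesssim h\enorm\bv_h\enorm_k$ to produce exactly $h^{-2}\cdot h^{k+1}\cdot h=h^k$, which is \eqref{E3}. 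It is precisely here that the improved normal is indispensable: with the cruder normal $\bn_h^k$, whose error is only $O(h^k)$, this mechanism would yield $h^{k-1}$ and destroy optimality.

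Finally I would assemble \eqref{E1}--\eqref{E4}. Each bound has the form $h^k$ times a data norm times either $\enorm\bv_h\enorm_k$ or $\|q_h\|_{L^2(\Gamma_h^k)}$, and both factors are dominated by $(\enorm\bv_h\enorm_k^2+\|q_h\|_{L^2(\Gamma_h^k)}^2)^{1/2}$. Dividing the summed numerator by this denominator collapses the velocity and pressure factors, and taking the supremum over $(\bv_h,q_h)\in\bV_h\times Q_{h,0}$ leaves the claimed bound $h^k\big(\|\bu_T\|_{H^1(\Gamma)}+\|p\|_{H^1(\Gamma)}+\|\bbf\|_{L^2(\Gamma)}+\|g\|_{L^2(\Gamma)}\big)$.
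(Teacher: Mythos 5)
Your proposal is correct and follows essentially the same route as the paper: the decomposition \eqref{consterm1} exploiting that $(\bu_T,p)$ solves \eqref{contform} with test functions $\bP\bv_h^\ell$ and $q_h^\ell$, then the term-by-term bounds \eqref{E1}--\eqref{E4} via \eqref{u1}, \eqref{estbdifference1}, \eqref{estbdifference2} (using $\bP\bu_T^e=\bu_T^e$), \eqref{betternormal} for the penalty term, and \eqref{dataerror} for the data defects. Your accounting of the penalty term, including the replacement of $\hat\bn_h^k$ by $\hat\bn_h^k-\bn$ against the tangential $\bu_T^e$ and the resulting $h^{-2}\cdot h^{k+1}\cdot h = h^k$ balance, is exactly the paper's estimate \eqref{E3}.
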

The results in Lemma~\ref{stranglemma}, \eqref{Eq2} and Lemma~\ref{lemconssis} yield the following (optimal) discretization error bound.
\begin{theorem} \label{mainthm}
 Let $(\bu_T, p)  \in H^{r+1}(\Gamma)^3 \times H^r(\Gamma)$, with $r \geq 1$, be the  solution of problem \eqref{contform} Let $(\bu_h, p_h) \in \bU_h \times Q_{h,0}$ the solution of the finite element problem \eqref{discreteform1} with data such that \eqref{dataerror} is satisfied. The following discretization error bound holds for $1 \leq r \leq m$:
\[ \label{mainest}  \begin{split}
  \enorm\bu_T^e  - \bu_{h} \enorm_k + \Vert p^e - p_{h} \Vert_{L^2(\Gamma_h^k)} &
   \lesssim  h^{r} \left(\Vert \bu_T \Vert_{H^{r+1}(\Gamma)}  + \Vert p \Vert_{H^{r}(\Gamma)}\right) \\ &  + h^k\big( \|\bu_T\|_{H^1(\Gamma)} +\|p\|_{H^1(\Gamma)}+ \|\bbf\|_{L^2(\Gamma)}+ \|g \|_{L^2(\Gamma)}\big).
\end{split}
\]
 \end{theorem}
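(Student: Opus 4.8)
The plan is to read the result off directly from the Strang-Lemma, since the three ingredients it requires have all been assembled. First I would apply Lemma~\ref{stranglemma} to the exact solution $(\bu_T,p)$ of \eqref{contform} and the discrete solution $(\bu_h,p_h)$ of \eqref{discreteform1}. This immediately bounds the energy-norm error $\enorm\bu_T^e - \bu_h\enorm_k + \|p^e - p_h\|_{L^2(\Gamma_h^k)}$ by the sum of a best-approximation term and a consistency supremum, i.e.\ by the right-hand side of \eqref{eqStrang}. The validity of this step rests on the discrete stability \eqref{eqinfsupAh}, which in turn combines the ellipticity of $A_h$ on $\bV_h$ (Lemma~\ref{lemma1}) with the discrete inf-sup property (Corollary~\ref{corolinfsup}); these are exactly the structural results established earlier, so no further work is needed at this stage.

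Next I would bound the best-approximation term. Under the regularity hypothesis $\bu_T \in H^{r+1}(\Gamma)^3$ and $p \in H^r(\Gamma)$ with $1 \le r \le m$, the estimate \eqref{Eq2} gives
\[
  \min_{(\bv_h,q_h)\in \bV_h\times Q_{h,0}}\bigl(\enorm\bu_T^e-\bv_h\enorm_k + \|p^e-q_h\|_{L^2(\Gamma_h^k)}\bigr) \lesssim h^{r}\bigl(\|\bu_T\|_{H^{r+1}(\Gamma)} + \|p\|_{H^r(\Gamma)}\bigr).
\]
This contribution carries the finite-element approximation order $h^r$. I would then bound the consistency supremum by Lemma~\ref{lemconssis}, which yields the geometric order $h^k$ times $\|\bu_T\|_{H^1(\Gamma)} + \|p\|_{H^1(\Gamma)} + \|\bbf\|_{L^2(\Gamma)} + \|g\|_{L^2(\Gamma)}$. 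Adding the two bounds produces exactly the claimed estimate, with the $h^r$ and $h^k$ terms kept separate because they reflect two distinct error mechanisms: polynomial approximation on one side, geometry and penalty consistency on the other.

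Since the theorem is a direct synthesis of Lemma~\ref{stranglemma}, \eqref{Eq2}, and Lemma~\ref{lemconssis}, there is essentially no remaining obstacle: all of the genuine difficulty has been front-loaded into those preparatory results. In particular it sits in the discrete inf-sup analysis---the equivalence chain of Corollary~\ref{corollary1}, Lemma~\ref{lemma3} and Corollary~\ref{corollary2} reducing the curved case $k\ge 2$ to the planar case $k=1$, followed by the Verf\"urth-trick proof of \eqref{infsupA}---and in the consistency estimates of Lemma~\ref{lemconssis}, where the use of the improved normal $\hat\bn_h^k$ in \eqref{E3} is precisely what secures the optimal order $h^k$ rather than a suboptimal one. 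The only point requiring mild care when writing out the combination is to check that the regularity assumed in the theorem statement is exactly what \eqref{Eq2} needs, so that the minimum over discrete pairs may legitimately be replaced by its upper bound.
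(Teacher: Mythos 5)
Your proposal is correct and follows exactly the paper's own route: the theorem is stated as an immediate combination of the Strang-Lemma (Lemma~\ref{stranglemma}), the approximation bound \eqref{Eq2}, and the consistency bound of Lemma~\ref{lemconssis}, which is precisely what you do. No gaps.
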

\ \\
We expect that using the techniques as in \cite{olshanskii2018finite,Hardering2022} and the energy error bound in Theorem~\ref{mainthm} above one can derive an optimal  $L^2$-error bound. We do not study this further here. 

\section{Linear algebra aspects} \label{secNumerical}
We briefly discuss a few implementation aspects and study conditioning of the resulting stiffness matrix. In particular we show that the penalty technique that is used in the discretization does not lead to poor conditioning properties of the stiffness matrix. A nice property of the method treated in this paper is that its implementation is very straightforward if a code for higher order surface parametric finite elements (as in \cite{demlow2009higher}) for scalar problems is already available.  One can then essentially use this code for each of the three velocity components and for the pressure unknown. Using the parametrization $\pi_k: \Gamma_h \to \Gamma_h^k$, the integrals over $\Gamma_h^k$ used in the bilinear forms are reformulated as integrals over $\Gamma_h$ and the discrete velocity $\bu_h= \tilde \bu_h \circ \pi_k^{-1}$,
 $\tilde \bu_h \in (V_h^m)^3$, and discrete pressure $p_h = \tilde p_h \circ \pi_k^{-1}$, $\tilde p_h \in \tilde V_h^{m-1}$,  are determined using the standard nodal basis in $V_h^m$ and $V_h^{m-1}$, respectively. An extensive numerical study of this surface Taylor-Hood finite element method for the Stokes problem is presented in \cite{FORjointpaper}. In that paper the isoparametric case $k=m$ with $k=2,3$, i.e. the Taylor-Hood pairs $\bP_2$-$P_1$ and $\bP_3$-$P_2$, is treated. Numerical experiments presented in \cite{FORjointpaper} demonstrate optimal order convergence (both in energy and $L^2$ norms). We refer to that paper for these results and for further details on the implementation.

Note that there is some overhead in computational work due to the fact that we use a \emph{three}-dimensional discrete velocity $\bu_h$ as approximation for the two-dimensional tangential velocity $\bu= \bu_T$. The polynomials used in the finite element method, however, are all defined on two-dimensional triangular domains. For such a polynomial of degree $m$ the number of degrees of freedom is $\tfrac12 (m+1)(m+2)$. Hence, if one uses Taylor-Hood $\bP_{m}-P_{m-1}$, $m \geq 2$, for Stokes in a planar domain (i.e., two velocity components) one has per triangle in total (i.e. velocity and pressure) $(m+1)(\tfrac32 m+2)$ unknowns. In our situation here, where we use \emph{three} velocity components the total number of unknowns per triangle is $(m+1)(2 m+3)$. We thus have an overhead factor (w.r.t. number of unknowns) of $(2 m+3)/(\tfrac32 m+2) \in (1\tfrac13, 1\tfrac25]$.

For an analysis of linear algebra aspects we need some further notation.  Let $n_u>0, n_p>0$ be the number of degrees of freedom in the finite element spaces $\bV_h$ and $Q_h$, i.e., $n_u= {\rm dim}(\bV_h)$, $n_p={\rm dim}(Q_h)$. Furthermore, $P_h^V:\,\mathbb{R}^{n_u}\to \bV_h$ and $P_h^Q:\,\mathbb{R}^{n_p}\to Q_h$ are canonical mappings between the vectors of nodal values and finite element functions, using the ($\pi_k$ image of the) nodal bases in $V_h^m$ (for velocity) and in $V_h^{m-1}$ 
(for pressure).
Denote by $\la\cdot,\cdot\ra$ and $\|\cdot\|$ the Euclidean scalar product and the corresponding norm. For matrices, $\|\cdot\|$ denotes the spectral norm in this section. Now we introduce several matrices. Let
$\bA\in\mathbb{R}^{n_u\times n_u}$, $\bB\in\mathbb{R}^{n_p\times n_u}$, $\bM_u\in \mathbb{R}^{n_u \times n_u}$, $\bM_p\in \mathbb{R}^{n_p \times n_p}$ be such that
\[
\begin{split}
\la \bA \vec u, \vec v\ra &=  A_h(P_h^V \vec u, P_h^V \vec v),~ \la \bB \vec u,\vec \lambda \ra= b_h(P_h^V \vec u,P_h^Q \vec\lambda),
\\ \la \bM_u\vec u,\vec v \ra & = (P_h^V \vec u,P_h^V \vec v)_{L^2(\Gamma_h^k)}, \quad \la \bM_p\vec \lambda,\vec \mu \ra  = (P_h^Q \vec \lambda,P_h^Q \vec \mu)_{L^2(\Gamma_h^k)},
\end{split}
\]
for all $\vec u,\vec v\in\mathbb{R}^{n_u},~~\vec \mu,\,\vec \lambda\in\mathbb{R}^{n_p}$. The matrices $\bA, \bM_u$ and $\bM_p$ are symmetric positive definite. With the same arguments as in a Euclidean domain in $\R^2$ one can verify that the  mass matrices $\bM_u$ and 
$\bM_p$ have a spectral condition number that is uniformly bounded,  independent of  $h$. 
We introduce the system matrix and its Schur complement:
\[
\A:=\left[\begin{matrix}
             \bA & \bB^T  \\
             \bB & 0
           \end{matrix}\right],\quad \bS:=\bB \bA^{-1} \bB^T.
\]
Let $\mathbf{1} \in \R^{n_p}$ be the vector with all entries 1. We have $\bB^T \mathbf 1=0$, hence $\A$ is singular. 
The algebraic system resulting from the finite element method \eqref{discreteform1} has the following form: Determine $\vec u \in \R^{n_u}$, $\vec\lambda \in \R^{n_p}$ with $\langle \bM_p \vec\lambda , \mathbf{1} \rangle =0$ such that 
\begin{equation}\label{SLAE}
\A \begin{pmatrix} \vec u \\ \vec \lambda \end{pmatrix}=\vec b,\quad\text{with suitable}~ \vec b\in\mathbb{R}^{n_u+n_p}.
\end{equation}
We will consider a block-diagonal preconditioner of the matrix $\A$, as is standard for discretized Stokes problems in Euclidean domains,  e.g., \cite{Benzi,ElmanBook}. For this we first analyze spectral properties of the matrices $\bA$ and $\bS$. In the following lemma we use spectral inequalities for symmetric matrices. We use $\mathbf{1}^{\perp_M}:= \{\, \vec \lambda \in \R^{n_p}~|~\langle \bM_p \vec\lambda , \mathbf{1} \rangle =0\,\}$. 
\begin{lemma} \label{lemprecond} There are strictly positive constants $\nu_{A,1}$, $\nu_{A,2}$, $\nu_{S,1}$, $\nu_{S,2}$, independent of $h$,  such that the following spectral inequalities hold:
 \begin{align}
  \nu_{A,1}  \bM_u & \leq \bA \leq \nu_{A,2} h^{-2} \bM_u, \label{spec1} \\
  \nu_{S,1}  \bM_p & \leq \bS \leq \nu_{S,2}  \bM_p \quad \text{on}~\mathbf{1}^{\perp_M}. \label{spec2}
 \end{align}
\end{lemma}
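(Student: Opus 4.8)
The plan is to translate each spectral inequality into an equivalent statement about the bilinear forms $A_h$ and $b_h$ evaluated on finite element functions, and then to invoke the ellipticity/continuity result of Lemma~\ref{lemma1} together with the discrete inf-sup property \eqref{infsup}. Throughout I write $\bv_h = P_h^V \vec v$ and $q_h = P_h^Q \vec\lambda$, so that by definition $\la \bA \vec v, \vec v\ra = A_h(\bv_h,\bv_h)$, $\la \bM_u \vec v,\vec v\ra = \|\bv_h\|_{L^2(\Gamma_h^k)}^2$, $\la \bM_p \vec\lambda,\vec\lambda\ra = \|q_h\|_{L^2(\Gamma_h^k)}^2$, and $\la \bB \vec v,\vec\lambda\ra = b_h(\bv_h,q_h)$.

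For \eqref{spec1} the lower bound is immediate from the ellipticity estimate \eqref{est1low}: $A_h(\bv_h,\bv_h) \gtrsim \enorm \bv_h \enorm_k^2 \geq \|\bv_h\|_{L^2(\Gamma_h^k)}^2$, which is exactly $\nu_{A,1}\bM_u \leq \bA$. For the upper bound I would combine the continuity estimate \eqref{est1up}, $A_h(\bv_h,\bv_h) \lesssim \enorm \bv_h \enorm_k^2$, with a finite element inverse inequality (valid in the parametric space, as already used in the proof of Lemma~\ref{lemma1}) giving $\|\bv_h\|_{H^1(\Gamma_h^k)}^2 \lesssim h^{-2}\|\bv_h\|_{L^2(\Gamma_h^k)}^2$; together with the trivial bound $h^{-2}\|\bn\cdot\bv_h\|_{L^2(\Gamma_h^k)}^2 \leq h^{-2}\|\bv_h\|_{L^2(\Gamma_h^k)}^2$ this yields $\enorm \bv_h \enorm_k^2 \lesssim h^{-2}\|\bv_h\|_{L^2(\Gamma_h^k)}^2$, i.e.\ $\bA \leq \nu_{A,2} h^{-2}\bM_u$.

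The estimate \eqref{spec2} rests on the standard variational characterization of the Schur complement. Since $\bA$ is symmetric positive definite, for any $\vec\lambda$ one has, by Cauchy--Schwarz in the $\bA$-inner product (with equality attained at $\vec v = \bA^{-1}\bB^T\vec\lambda$), the identity
\[
  \la \bS \vec\lambda,\vec\lambda\ra = \la \bA^{-1}\bB^T\vec\lambda,\bB^T\vec\lambda\ra = \sup_{\vec v \neq 0} \frac{\la \bB^T\vec\lambda,\vec v\ra^2}{\la \bA \vec v,\vec v\ra} = \sup_{\bv_h \in \bV_h} \frac{b_h(\bv_h,q_h)^2}{A_h(\bv_h,\bv_h)}.
\]
The upper bound in \eqref{spec2} is then immediate and in fact holds for all $\vec\lambda$: using the continuity \eqref{Cb} of $b_h$ and the ellipticity \eqref{est1low} of $A_h$, every quotient above is bounded by $(C_b^2/c)\|q_h\|_{L^2(\Gamma_h^k)}^2$, which gives $\bS \leq \nu_{S,2}\bM_p$.

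For the lower bound in \eqref{spec2} I would restrict to $\vec\lambda \in \mathbf{1}^{\perp_M}$. The key observation is that the constraint $\la \bM_p\vec\lambda,\mathbf 1\ra = 0$ is precisely $\int_{\Gamma_h^k} q_h \, ds_{hk} = 0$, so that $q_h \in Q_{h,0}$ and the discrete inf-sup property \eqref{infsup} applies. Choosing $\bv_h$ to (nearly) realize the supremum in \eqref{infsup}, so that $b_h(\bv_h,q_h) \geq c_\ast \|q_h\|_{L^2(\Gamma_h^k)} \enorm \bv_h \enorm_k$, and bounding the denominator from above by the continuity \eqref{est1up}, $A_h(\bv_h,\bv_h) \lesssim \enorm \bv_h \enorm_k^2$, the corresponding quotient in the Schur-complement identity is bounded below by a fixed multiple of $\|q_h\|_{L^2(\Gamma_h^k)}^2$, and taking the supremum yields $\nu_{S,1}\bM_p \leq \bS$ on $\mathbf{1}^{\perp_M}$. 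The only genuinely delicate point is this correspondence between the algebraic side condition $\vec\lambda \in \mathbf 1^{\perp_M}$ and the analytic mean-value constraint defining $Q_{h,0}$, which is exactly what makes \eqref{infsup} applicable; everything else is a direct combination of Lemma~\ref{lemma1} with the Schur-complement identity.
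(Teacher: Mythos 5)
Your proposal is correct and follows essentially the same route as the paper: the bounds \eqref{spec1} come from Lemma~\ref{lemma1} combined with a finite element inverse inequality, and \eqref{spec2} from the variational characterization of the Schur complement together with the continuity bound \eqref{Cb} and the discrete inf-sup property of Corollary~\ref{corolinfsup}, with the observation that $\vec\lambda\in\mathbf{1}^{\perp_M}$ is equivalent to $q_h\in Q_{h,0}$. No substantive differences from the paper's argument.
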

\begin{proof}
Note that for $\vec v\in\mathbb{R}^{n_u}$ with $\bv_h:=P_h^V \vec v$ we have 
\begin{equation}\label{cond1}
\frac{\la \bA\vec v,\vec v\ra}{\la \bM_u \vec v, \vec v\ra}=
\frac{A_h(P_h^V \vec v, P_h^V \vec v)}{\|P_h^V \vec v\|^2_{L^2(\Gamma_h^k)}}=\frac{A_h(\bv_h,\bv_h)}{\|\bv_h\|^2_{L^2(\Gamma_h^k)}} .
  \end{equation}
From Lemma~\ref{lemma1} we get $A_h(\bv_h,\bv_h) \sim \enorm \bv_h \enorm_k^2 = \|\bv_h\|_{H^1(\Gamma_h^k)}^2+h^{-2}\|\bn \cdot \bv_h\|_{L^2(\Gamma_h^k)}^2$ and using a finite element inverse inequality we obtain  
\[
 \|\bv_h\|_{L^2(\Gamma_h^k)}^2 \lesssim \enorm \bv_h\enorm_k^2 \lesssim h^{-2} \|\bv_h\|_{L^2(\Gamma_h^k)}^2,
\]
which proves the estimates in \eqref{spec1}.
For the Schur complement matrix $\bS$, we have
\begin{equation}\label{cond3}
 \la \bS \vec \lambda,\vec \lambda \ra = \Big( \sup_{\bv_h \in \bV_h} \frac{b_h(\bv_h,q_h)}{A_h(\bv_h,\bv_h)^\frac12} \Big)^2, \quad q_h:=P_h^Q \vec \lambda.
  \end{equation}
Using Lemma~\ref{lemma1} and the discrete inf-sup property, cf. Corollary~\ref{corolinfsup},  we  get for $\vec \lambda \in \mathbf{1}^{\perp_M}$:
\begin{equation} 
 \la \bM_p \vec \lambda, \vec \lambda \ra = \|q_h\|_{L^2(\Gamma_h^k)}^2 \lesssim \Big( \sup_{\bv_h \in \bV_h} \frac{b_h(\bv_h,q_h)}{A_h(\bv_h,\bv_h)^\frac12} \Big)^2 = \la \bS \vec \lambda,\vec \lambda \ra ,
\end{equation}
which proves the first inequality in \eqref{spec2}. The other inequality in  \eqref{spec2} follows from \eqref{cond3} and \eqref{Cb}:
\[ \begin{split}
  |b_h(\bv_h,q_h)| & \lesssim \enorm\bv_h\enorm_k \|q_h\|_{L^2(\Gamma_h^k)} \lesssim A_h(\bv_h,\bv_h)^\frac12 \|q_h\|_{L^2(\Gamma_h^k)}\\ &  = A_h(\bv_h,\bv_h)^\frac12 \la \bM_p \vec \lambda, \vec \lambda \ra^\frac12.
\end{split} \]
\end{proof}
\ \\[1ex]
The result in \eqref{spec1} shows that the condition number of the $\bA$ matrix behaves as in a standard Stokes problem. In particular the penalty technique has no significant negative effect on the condition number of $\bA$. The result in \eqref{spec1}  shows that,  as in the standard Stokes case,   the pressure mass matrix $\bM_p$ is an optimal preconditioner for the Schur complement matrix $\bS$. 

For the analysis of block precondioners for $\A$ we can apply analyses known from the literature  \cite[Section 4.2]{ElmanBook}. These results show that for an efficient solver for the linear system \eqref{SLAE} one needs only an efficient solver for the symmetric positive definite $\bA$ block. One particular result \cite[Theorem 4.7]{ElmanBook} is the following.  
\begin{corollary}\label{corr:pc}
Define a block diagonal preconditioner
\[
  Q:=\left[\begin{matrix}
             \bQ_A & 0  \\
             0 & \bM_p
           \end{matrix}\right]
\]
of $\A$,  with $\bQ_A \sim \bA$  a uniformly spectrally equivalent preconditioner of $\bA$. For the effective spectrum $\sigma_\ast(Q^{-1}\A):= \sigma(Q^{-1}\A) \setminus \{0\}$
of the preconditioned matrix  we have
\[ \sigma_\ast(Q^{-1}\A) \subset \big([C_{-},c_{-}]\cup[c_+,C_+]\big), \]
with some constants
$C_{-} < c_{-} < 0 < c_+ < C_+$ independent of $h$.
\end{corollary}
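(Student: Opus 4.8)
The plan is to reduce the claim to the classical eigenvalue localization for block-diagonally preconditioned symmetric saddle point systems, feeding it the two spectral equivalences of Lemma~\ref{lemprecond} together with the hypothesis $\bQ_A \sim \bA$. Since $Q$ is symmetric positive definite and $\A$ is symmetric, every generalized eigenvalue of the pencil $(\A,Q)$ is real, so it suffices to bound the nonzero ones away from $0$ on both sides. First I would isolate the single zero eigenvalue: because $\bB^T\mathbf 1=0$, the vector $(0,\mathbf 1)$ lies in $\ker\A$, and a short computation ($\bA \vec u+\bB^T\vec q=0$, $\bB\vec u=0$ force $\vec q\in\ker\bS=\Span\{\mathbf 1\}$ and then $\vec u=0$) shows $\ker\A=\Span\{(0,\mathbf 1)\}$ is one-dimensional. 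As $Q$ is invertible, this accounts for the lone $0$ in $\sigma(Q^{-1}\A)$, and removing it corresponds exactly to restricting the pressure block to $\mathbf 1^{\perp_M}$, on which $\bB$ has full rank.

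On that subspace the generalized eigenvalue problem $\A(\vec u,\vec q)^T=\mu\,Q(\vec u,\vec q)^T$ reads
\begin{align*}
  \bA\vec u + \bB^T\vec q &= \mu\,\bQ_A\vec u, \\
  \bB\vec u &= \mu\,\bM_p\vec q, \qquad \vec q\in\mathbf 1^{\perp_M}.
\end{align*}
Next I would convert the two block hypotheses into the single consistent pair of equivalences required by the abstract theory. The assumption $\bQ_A\sim\bA$ provides $h$-independent constants $\gamma_0,\gamma_1>0$ with $\gamma_0\bQ_A\leq\bA\leq\gamma_1\bQ_A$, and combining this with \eqref{spec2} shows that the $\bQ_A$-Schur complement is uniformly equivalent to $\bM_p$ on $\mathbf 1^{\perp_M}$, since $\bB\bQ_A^{-1}\bB^T\sim\bB\bA^{-1}\bB^T=\bS\sim\bM_p$ with all constants independent of $h$ by Lemma~\ref{lemprecond}. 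These are precisely the hypotheses of the Rusten--Winther localization result stated as \cite[Theorem 4.7]{ElmanBook}.

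Finally I would invoke that theorem, or equivalently re-derive its conclusion by the standard Rayleigh-quotient argument: treating $\mu>0$ and $\mu<0$ separately, eliminating $\vec u$ from the first equation and substituting into the second yields a scalar quadratic relation in $\mu$ whose coefficients are controlled entirely by $\gamma_0,\gamma_1,\nu_{S,1},\nu_{S,2}$. This confines the positive eigenvalues to an interval $[c_+,C_+]$ and the negative ones to $[C_-,c_-]$ with $C_-<c_-<0<c_+<C_+$ depending only on those four constants, hence independent of $h$. The one point requiring genuine care is the singularity bookkeeping: one must confirm that the constant-pressure mode is the \emph{only} source of a zero eigenvalue and that, after its removal, $\bB$ is of full rank so the Schur-complement equivalence on $\mathbf 1^{\perp_M}$ is nondegenerate; once Lemma~\ref{lemprecond} is in hand, the remaining steps are routine.
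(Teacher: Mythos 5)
Your proposal is correct and follows the same route as the paper, which proves this corollary simply by feeding the spectral equivalences of Lemma~\ref{lemprecond} (together with $\bQ_A \sim \bA$) into the standard eigenvalue-localization result for block-diagonally preconditioned saddle-point matrices, cited as \cite[Theorem 4.7]{ElmanBook}. Your additional bookkeeping of the one-dimensional kernel $\Span\{(0,\mathbf 1)\}$ and the restriction to $\mathbf 1^{\perp_M}$ is exactly the right way to justify discarding the single zero eigenvalue, which the paper handles implicitly via the definition of $\sigma_\ast$.
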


\subsection*{Acknowledgment} The author wishes to thank Hanne Hardering and Simon Praetorius for fruitful discussions and the German Research Foundation (DFG) for financial support within the Research Unit ``Vector- and tensor valued surface PDEs'' (FOR 3013) with project no. RE 1461/11-2.

\bibliographystyle{siam}
\bibliography{literatur}{}

\end{document}